\documentclass{amsart}

\usepackage{amsmath,amsthm,mathrsfs,amssymb,graphicx,comment}
\usepackage[all]{xy}
\usepackage{tikz}
\usetikzlibrary{shapes,snakes}
\usepackage[pdfborder={0 0 0},backref=false,colorlinks=true,linktocpage=true]{hyperref}

\newtheorem{theorem}{Theorem}[section]
\newtheorem{proposition}[theorem]{Proposition}
\newtheorem{lemma}[theorem]{Lemma}
\newtheorem{corollary}[theorem]{Corollary}
\theoremstyle{definition}
\newtheorem{definition}[theorem]{Definition}






\begin{document}

\title{The Complexity of Primes in Computable UFDs}

\author[Dzhafarov]{Damir D. Dzhafarov}
\address{Department of Mathematics\\
Department of Mathematics\\
University of Connecticut\\
Storrs, Connecticut 06269
U.S.A.}
\email{damir@math.uconn.edu}

\author[Mileti]{Joseph R. Mileti}
\address{Department of Mathematics and Statistics\\
Grinnell College\\
Grinnell, Iowa 50112
U.S.A.}
\email{miletijo@grinnell.edu}

\thanks{Dzhafarov was partially supported by an NSF Postdoctoral Fellowship.  We thank Sean Sather-Wagstaff for pointing out Nagata'a Criterion to us, and Keith Conrad for helpful discussions.}

\begin{abstract}
In many simple integral domains, such as $\mathbb{Z}$ or $\mathbb{Z}[i]$, there is a straightforward procedure to determine if an element is prime by simply reducing to a direct check of finitely many potential divisors.  Despite the fact that such a naive approach does not immediately translate to integral domains like $\mathbb{Z}[x]$ or the ring of integers in an algebraic number field, there still exist computational procedures that work to determine the prime elements in these cases.  In contrast, we will show how to computably extend $\mathbb{Z}$ in such a way that we can control the ordinary integer primes in any $\Pi_2^0$ way, all while maintaining unique factorization.  As a corollary, we establish the existence of a computable UFD such that the set of primes is $\Pi_2^0$-complete in every computable presentation.
\end{abstract}

\maketitle

\section{Introduction}

The power and versatility of modern algebra arise from the abstract and axiomatic approach it takes.  However, with the rise of computer algebra systems, it is important to find algorithms in order to perform computations within these algebraic structures.  Of course, in these settings, one also cares about the efficiency of these procedures.  For example, although the primes in $\mathbb{Z}$ are trivially computable, there is a great deal of interest in how quickly we can determine whether an element is prime.  In contrast, it is known that there are computable integral domains where it is impossible even in principle to determine the primes computationally (see below).  In this paper, we extend these examples to build a computable UFD where the primes are maximally complicated in a very strong sense.  We begin with the following definition (see \cite{Soare} for background on the formal definitions of computable sets and function).

\begin{definition}
A {\em computable ring} is a ring whose underlying set is a computable set $A \subseteq \mathbb{N}$, with the property that $+$ and $\cdot$ are computable functions from $A \times A$ to $A$.
\end{definition}

For example, it is easy to view $\mathbb{Z}$ as a computable ring by using the even natural numbers to code the positive elements in ascending order and the odd natural numbers to code the negative elements in descending order.  Of course, we can view $\mathbb{Z}$ as a computable ring in a different way by switching the roles of the evens and odds.  Thus, a given ring can have multiple distinct computable {\em presentations}.  Many other natural rings can also be viewed as computable rings.  Since we can code relatively prime pairs of natural numbers using a single natural number, we can view $\mathbb{Q}$ as a computable ring.  Similarly, since we can code finite sequences of integers as natural numbers, we can view $\mathbb{Z}[x]$ as a computable ring as well.  Generalizing this, given an arbitrary computable ring $A$, we can realize the polynomial ring $A[x]$ as a computable ring in a natural way.  In contrast, uncountable rings can never be viewed as computable rings, and there are some countable rings that can not as well.

For a general overview of results about computable rings and fields, see \cite{SHTucker}.  Computable fields have received a great deal of attention (\cite{FrohlichShep}, \cite{MetakidesNerode}, \cite{Rabin}), and \cite{MillerNotices} provides an excellent overview of work in this area.  For computable rings, several papers (\cite{ConidisRadical}, \cite{Ideals}, \cite{FSS}, \cite{FSSAdd}) have studied the complexity of ideals and radicals  from the perspective of computability theory and reverse mathematics.

The following algebraic definitions are standard.

\begin{definition}
Let $A$ be an integral domain, i.e.~a commutative ring with $1 \neq 0$ and with no zero divisors (so $ab = 0$ implies either $a = 0$ or $b=0$).  Recall the following definitions.
\begin{enumerate}
\item An element $u \in A$ is a {\em unit} if there exists $w \in A$ with $uw = 1$.  We denote the set of units by $U(A)$.  Notice that $U(A)$ is a multiplicative group.
\item Given $a,b \in A$, we say that $a$ and $b$ are {\em associates} if there exists $u \in U(A)$ with $au = b$.  We denote the set of associates of $a$ by $Associates_A(a)$.
\item An element $p \in A$ is {\em irreducible} if it nonzero, not a unit, and has the property that whenever $p = ab$, either $a$ is a unit or $b$ is a unit.  An equivalent definition is that $p \in A$ is irreducible if it is nonzero, not a unit, and its divisors are precisely the units and the associates of $p$.
\item An element $p \in A$ is {\em prime} if it nonzero, not a unit, and has the property that whenever $p \mid ab$, either $p \mid a$ or $p \mid b$.  We denotes the set of primes of $A$ by $Primes(A)$.
\item $A$ is a {\em unique factorization domain}, or {\em UFD}, if it has the following two properties:
\begin{itemize}
\item For each $a \in A$ such that $a$ is nonzero and not a unit, there exist irreducible elements $r_1,r_2,\dots,r_n \in A$ with $a = r_1r_2 \cdots r_n$.
\item If $r_1,r_2,\dots,r_n,q_1,q_2,\dots,q_m \in A$ are all irreducible and $r_1r_2 \cdots r_n = q_1q_2 \cdots q_m$, then $n = m$ and there exists a permutation $\sigma$ of $\{1,2,\dots,n\}$ such that $r_i$ and $q_{\sigma(i)}$ are associates for all $i$.
\end{itemize}
\end{enumerate}
\end{definition}

It is a simple fact that if $A$ is an integral domain, then every prime element of $A$ is irreducible.  The converse fails in general, but is true in every UFD.  In fact, we have the following standard result.

\begin{theorem} \label{t:EquivCharOfUFD}
Let $A$ be an integral domain.  The following are equivalent:
\begin{enumerate}
\item $A$ is a UFD.
\item Every element of $A$ that is nonzero and not a unit is a product of irreducibles, and every irreducible element of $A$ is prime.
\end{enumerate}
\end{theorem}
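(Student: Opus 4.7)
The plan is to prove the two directions of the equivalence separately, using only the definitions and the fact (already noted just before the statement) that in any integral domain every prime is irreducible.

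For $(1) \Rightarrow (2)$: the existence half of (2) is immediate from the first bullet in the definition of a UFD, so the only work is to show that in a UFD every irreducible $p$ is prime. Given $p \mid ab$, I would write $ab = pc$ and dispose of trivial cases first (if $a$ or $b$ is zero, or a unit, then $p \mid a$ or $p \mid b$ is immediate). In the remaining case, factor $a$, $b$, and $c$ (if $c$ is nonzero and not a unit) into irreducibles, and compare the two resulting irreducible factorizations of $ab$. The uniqueness clause of the UFD definition then forces $p$ to be an associate of some irreducible factor appearing in the factorization of $a$ or $b$, which gives $p \mid a$ or $p \mid b$. The small nuisance here is handling the case where $c$ itself is a unit, but this just shortens the factorization on one side.

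For $(2) \Rightarrow (1)$: again the existence clause transfers directly from (2), so only uniqueness remains. I would prove by induction on $n$ that whenever $r_1 r_2 \cdots r_n = q_1 q_2 \cdots q_m$ with all factors irreducible, we have $n = m$ and the $q_j$ can be permuted to be associates of the $r_i$. The base case $n = 1$ reduces to observing that an irreducible cannot be written as a product of two or more irreducibles, since a product of irreducibles of length $\ge 2$ is not a unit (each irreducible is a non-unit, and in an integral domain a product with a non-unit factor is a non-unit, modulo the standard check). For the inductive step, use the hypothesis that $r_1$ is prime to conclude $r_1 \mid q_j$ for some $j$; since $q_j$ is irreducible and $r_1$ is not a unit, $r_1$ and $q_j$ are associates. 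Writing $q_j = u r_1$ for a unit $u$ and cancelling $r_1$ (valid because $A$ is an integral domain) absorbs $u$ into a neighboring factor and yields an equation of irreducible products of length $n-1$ on the left, to which the inductive hypothesis applies.

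There is no truly hard step; the argument is a bookkeeping exercise. The main thing to be careful about is the base case and the cancellation step, where one must invoke the no-zero-divisors hypothesis explicitly, and the treatment of unit or zero factors when applying the existence of irreducible factorizations to $a$, $b$, $c$ in the first implication.
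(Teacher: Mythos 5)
Your proof is correct and is the standard textbook argument for this equivalence; the paper itself gives no proof, simply labeling it a ``standard result.'' Both directions are handled exactly as one would expect: for $(1)\Rightarrow(2)$ you reduce primeness of an irreducible $p$ to comparing the two irreducible factorizations of $ab=pc$ and invoking uniqueness, and for $(2)\Rightarrow(1)$ you run the usual induction on the length $n$ of a factorization, using primeness of $r_1$ to match it with some $q_j$ and cancelling (valid since $A$ is a domain). One small remark on the ``nuisance'' case you flag in the first direction: if $a$, $b$ are both nonzero non-units and $c$ is a unit, the uniqueness clause already forces a contradiction (at least two irreducible factors on one side versus one on the other), so that case simply cannot arise; your phrasing about shortening the factorization is a slightly loose way of saying the same thing and does not affect the argument.
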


Of course, for most computable integral domains that arise in practice, the set of primes form a computable set in any natural computable presentation.  For the ring $\mathbb{Z}$, the set of primes trivially form a computable set.  Kronecker showed that the set of primes in (any reasonable computable presentation of) the UFD $\mathbb{Z}[x]$ is computable.  Using Gauss' Lemma and the fact that every element of $\mathbb{Q}[x]$ is an associate of an element of $\mathbb{Z}[x]$, it follows that the set of primes in $\mathbb{Q}[x]$ is computable as well. 

Consider a number field $K$ with $[K : \mathbb{Q}] = n$ and let $\mathcal{O}_K$ be the set of algebraic integers in $K$.  In general, $\mathcal{O}_K$ is always a Dedekind domain, but it may not be a UFD.  We may fix an integral basis of $K$ over $\mathbb{Q}$, i.e.~fix $b_1,b_2,\dots,b_n \in \mathcal{O}_K$ that form a basis for $K$ over $\mathbb{Q}$ such that
\[
\mathcal{O}_K = \{m_1b_1 + m_2b_2 + \dots + m_nb_n : m_i \in \mathbb{Z}\}.
\]
Now given the finitely many values $b_i \cdot b_j$, we can compute the multiplication function on $K$ and hence on $\mathcal{O}_K$ as well.  Since we can simply hard code in these values, it follows that any integral basis provides a computable presentation of the field $K$ (by working with underlying set $\mathbb{Q}^n$) and the ring $\mathcal{O}_K$ (by working with underlying set $\mathbb{Z}^n$).  We have the following fact.

\begin{proposition}
Let $K$ be a number field with $[K : \mathbb{Q}] = n$.  If we fix an integral basis of $K$ over $\mathbb{Q}$, and represent elements of $\mathcal{O}_K$ using elements of $\mathbb{Z}^n$, then the set of primes elements of $\mathcal{O}_K$ is computable.
\end{proposition}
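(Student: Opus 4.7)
The plan is to reduce the primality of an element $\alpha \in \mathcal{O}_K \setminus \{0\}$ to checking whether the finite quotient ring $\mathcal{O}_K/(\alpha)$ is an integral domain, and to show that this check can be carried out effectively. A useful preliminary observation is that the integral basis makes $K$ itself a computable field with underlying set $\mathbb{Q}^n$: addition and multiplication are computable from the hard-coded products $b_i b_j$, and division is computable by solving, for given $u, v \in K$ with $v \neq 0$, the linear system $v w = u$ for $w \in \mathbb{Q}^n$. Consequently, divisibility in $\mathcal{O}_K$ is decidable: for $\alpha, \beta \in \mathcal{O}_K$ with $\alpha \neq 0$, one has $\alpha \mid \beta$ iff the coordinates of $\beta/\alpha \in K$ in the integral basis are all integers.

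Given $\alpha \neq 0$, I would first compute $N := |N_{K/\mathbb{Q}}(\alpha)|$ as the absolute value of the determinant of the matrix of multiplication by $\alpha$ in the integral basis. If $N = 1$ then $\alpha$ is a unit and hence not prime. Otherwise, I would invoke the classical fact that $\alpha$ divides $N_{K/\mathbb{Q}}(\alpha)$ in $\mathcal{O}_K$; this holds because the quotient $N_{K/\mathbb{Q}}(\alpha)/\alpha$, being a product of algebraic integers (the non-identity conjugates of $\alpha$ in a Galois closure of $K$) and also an element of $K$, must lie in $\mathcal{O}_L \cap K = \mathcal{O}_K$. Thus $N \in (\alpha)$, and every element of $\mathcal{O}_K$ is congruent modulo $\alpha$ to some element of the finite, decidable set
\[
S = \Bigl\{ \sum_{i=1}^n c_i b_i : 0 \le c_i < N \Bigr\}.
\]

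To finish, observe that $\alpha$ is prime iff $(\alpha)$ is a prime ideal iff $\mathcal{O}_K/(\alpha)$ is an integral domain. Since every coset has a representative in $S$, this last condition is equivalent to the finitely checkable statement: for all $\beta, \gamma \in S$ satisfying $\alpha \nmid \beta$ and $\alpha \nmid \gamma$, one also has $\alpha \nmid \beta\gamma$. Because $S$ is finite and divisibility is decidable, this yields a terminating decision procedure.

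The main (and mild) obstacle is locating a computable finite transversal for the cosets of $(\alpha)$ in $\mathcal{O}_K$; the divisibility $\alpha \mid N_{K/\mathbb{Q}}(\alpha)$ is precisely what reduces an a priori unbounded quantifier over $\mathcal{O}_K \times \mathcal{O}_K$ to a bounded one over $S \times S$. Apart from this, the argument uses only the definition of a prime element and the effective field arithmetic in $K$ induced by the integral basis.
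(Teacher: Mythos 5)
Your proof is correct and shares the same overarching strategy as the paper's: reduce primality of $\alpha$ to the question of whether the finite quotient $\mathcal{O}_K/(\alpha)$ is an integral domain, using the computability of division in $K$ (and hence of divisibility in $\mathcal{O}_K$) to make the check effective. The difference lies in how you obtain a computable finite set of coset representatives. The paper invokes the index formula $|\mathcal{O}_K/\langle\alpha\rangle| = |N(\alpha)|$, computes this cardinality, and then searches through $\mathcal{O}_K$ until that many pairwise-incongruent elements are found; the decidable divisibility relation is what allows one to recognize when the search is complete. You instead use the elementary fact $\alpha \mid N_{K/\mathbb{Q}}(\alpha)$ (the cofactor is a product of conjugate algebraic integers lying in $K$, hence in $\mathcal{O}_K$) to conclude that the box $S = \{\sum_i c_i b_i : 0 \le c_i < |N(\alpha)|\}$ already contains a representative of every coset, sidestepping both the exact-count formula and the search. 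Your route is slightly more elementary and more explicit (no search loop; the bound is direct), at the mild cost that $S$ may contain redundant representatives, which is harmless for your finite zero-divisor check. Both arguments also correctly dispose of the degenerate cases ($\alpha = 0$ and units via $|N(\alpha)| = 1$) before invoking the equivalence with primality of the ideal $(\alpha)$.
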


\begin{proof}
Given $\alpha \in K$, the map $\varphi_{\alpha} \colon K \to K$ defined by $\varphi_{\alpha}(x) = \alpha \cdot x$ is a $\mathbb{Q}$-linear map, and moreover we can uniformly compute a matrix $M_{\alpha}$ with rational entries representing this map because we need only express $\alpha \cdot b_i$ in terms of our basis.  Furthermore, notice that if $\alpha \in \mathcal{O}_K$, then $\varphi_{\alpha}$ maps $\mathcal{O}_K$ into $\mathcal{O}_K$, and hence $M_{\alpha}$ has integer entries.  From this, we can conclude that the norm map $N \colon \mathcal{O}_K \to \mathbb{Z}$ defined by $N(\alpha) = \det(\varphi_{\alpha}) = \det(M_{\alpha})$ is a computable function.  Since an element $\alpha \in \mathcal{O}_K$ is a unit if and only if $N(\alpha) = \pm 1$, it follows that $U(\mathcal{O}_K)$ is a computable set.

Moreover, given $\alpha,\beta \in K$ with $\alpha \neq 0$ represented as elements of $\mathbb{Q}^n$, we can uniformly compute $\frac{\beta}{\alpha}$ as represented by an element of $\mathbb{Q}^n$ by simply searching through the effectively countable set $\mathbb{Q}^n$ until we find $\gamma \in K$ with $\gamma \cdot \alpha = \beta$.  Now if $\alpha,\beta \in \mathcal{O}_K$, we can effectively determine if $\alpha \mid \beta$ in $\mathcal{O}_K$ by checking if this representation of $\frac{\beta}{\alpha}$ is in $\mathbb{Z}^n$.  Therefore, the divisibility relation on $\mathcal{O}_K$ is computable.

Since we can compute the norm of an element, and since $|\mathcal{O}_K / \langle \alpha \rangle| = |N(\alpha)|$, we can compute the function $f \colon \mathcal{O}_K \backslash \{0\} \to \mathbb{N}$ defined by $f(\alpha) = |\mathcal{O}_K / \langle \alpha \rangle|$.  Now to determine if $\alpha$ is prime, we compute $f(\alpha)$ and then search until we find $f(\alpha)$ many distinct representative of the quotient (this is possible because the divisibility relation is computable).  With these representatives, we can form the finite multiplicative table of the quotient (again using the fact that the divisibility relation is computable).  To determine if $\alpha$ is prime, we then check if the quotient has any zero divisors, which is now just a finite check.
\end{proof}

Despite all of this, there are computable integral domains such that the set of primes is not computable.  In fact, there is a computable field $F$ such that the set of primes in $F[x]$ is not computable (see \cite[Lemma 3.4]{MillerNotices} or \cite[Section 3.2]{SHTucker} for an example).  There exist methods to measure the complexity of sets that are not computable, and we investigate the placement of such sets in the arithmetical hierarchy arising from quantifying over computable relations.

\begin{definition}
Let $Z \subseteq \mathbb{N}$.
\begin{itemize}
\item We say that $Z$ is a $\Sigma_1^0$ set if there exists a computable $R \subseteq \mathbb{N}^2$ such that
\[
i \in Z \Longleftrightarrow (\exists x) R(x,i).
\]
\item We say that $Z$ is a $\Pi_1^0$ set if there exists a computable $R \subseteq \mathbb{N}^2$ such that
\[
i \in Z \Longleftrightarrow (\forall x) R(x,i).
\]
\item We say that $Z$ is a $\Pi_2^0$ set if there exists a computable $R \subseteq \mathbb{N}^3$ such that
\[
i \in Z \Longleftrightarrow (\forall x)(\exists y) R(x,y,i).
\]
\end{itemize}
\end{definition}

Since it is possible to computably code finite sequences of natural numbers with a single natural number, the above definitions do not change if we allow finite consecutive blocks of the same (existential or universal) quantifiers.  Although every computable set is $\Sigma_1^0$, there exists a $\Sigma_1^0$ set that is not computable, such as the set of natural numbers coding programs that halt.  Similarly, the collection of $\Sigma_1^0$ sets is a proper subset of the collection of all $\Pi_2^0$ sets, and the collection of $\Pi_1^0$ sets is a proper subset of the collection of all $\Pi_2^0$ sets.  See \cite[Chapter 4]{Soare} for more information about the arithmetical hierarchy.

Suppose that $A$ is a computable integral domain.  We then have that $U(A)$ is a $\Sigma_1^0$ set because
\[
u \in U(A) \Longleftrightarrow (\exists w)[uw = 1],
\]
and the relation $uw = 1$ is computable.  The set of irreducibles of $A$ is a $\Pi_2^0$ set because $p$ is irreducible in $A$ if and only if
\[
p \neq 0 \wedge (\forall c)[pc \neq 1] \wedge (\forall a)(\forall b)[p = ab \rightarrow a \in U(A) \vee b \in U(A)],
\]
and we already know that $U(A)$ is a $\Sigma_1^0$ set.  A similar analysis shows that the set of primes of $A$ is a $\Pi_2^0$ set.  Our main result is the following, which says that this result is best possible in a very strong sense.

\begin{theorem} \label{t:Pi2ControlOfPrimes}
Let $Q$ be a $\Pi_2^0$ set, and let $p_0,p_1,p_2,\dots$ list the usual primes from $\mathbb{N}$ in increasing order.  There exists a computable UFD $A$ such that:
\begin{itemize}
\item $\mathbb{Z}$ is a subring of $A$.
\item $p_i$ is prime in $A$ if and only if $i \in Q$.
\end{itemize}
\end{theorem}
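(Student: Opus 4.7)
The plan is to build $A$ as an increasing union $\bigcup_s A_s$ of finitely presented computable extensions of $\mathbb{Z}$, driven by a computable approximation to $Q$. Fix a computable $R$ with $i \in Q \iff (\forall x)(\exists y)\,R(x,y,i)$ and let $h(i,s)$ be the largest $n \leq s$ such that $(\forall x<n)(\exists y\leq s)\,R(x,y,i)$; then $h$ is computable, nondecreasing in $s$, and $\lim_s h(i,s) = \infty$ exactly when $i \in Q$.

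For each $i$ I maintain a single \emph{active attempt}: a pair of fresh variables $a_{i,n_i}, b_{i,n_i}$ with the relation $a_{i,n_i} b_{i,n_i} = p_i$, tagged with the threshold $h(i,s)$ at introduction. At stage $s$, for each $i<s$: if $i$ has no attempt yet, introduce $a_{i,0}, b_{i,0}$ with threshold $h(i,s)$; if the active threshold is strictly less than $h(i,s)$, first \emph{cancel} the attempt by adjoining $c_{i,n_i}$ with $a_{i,n_i} c_{i,n_i} = 1$ (making $a_{i,n_i}$ a unit and trivializing the factorization, since $b_{i,n_i} = a_{i,n_i}^{-1}p_i$ becomes an associate of $p_i$), and then introduce a new attempt $a_{i,n_i+1}, b_{i,n_i+1}$ with threshold $h(i,s)$. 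If $i \in Q$, then $h(i,s) \to \infty$, so every attempt for $i$ is eventually cancelled and in $A$ every factorization $p_i = a_{i,n} b_{i,n}$ is trivial. If $i \notin Q$, then $h(i,\cdot)$ stabilizes, so the final active attempt $a_{i,n_\infty}, b_{i,n_\infty}$ is never cancelled, giving in $A$ a factorization $p_i = a_{i,n_\infty} b_{i,n_\infty}$ with both factors non-units; hence $p_i$ is reducible and, by Theorem~\ref{t:EquivCharOfUFD}, not prime in the UFD $A$.

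To verify that $A$ is a UFD, I would apply Nagata's criterion with $S \subseteq A$ the multiplicative set generated by the final active witnesses $\{a_{i,n_\infty} : i \notin Q\}$. Inverting $S$ forces every $a_{i,n}$ to be a unit (cancelled ones via their adjoined inverses, active ones via the localization) and makes each $b_{i,n} = a_{i,n}^{-1} p_i$ redundant, so $S^{-1}A$ is isomorphic to the Laurent polynomial ring $\mathbb{Z}[\{a_{i,n}^{\pm 1}\}]$, which is a UFD. Primality in $A$ of each generator $a_{i_0,n_\infty}$ of $S$ follows from analyzing the quotient: the defining relation gives $p_{i_0} = 0$ in $A/(a_{i_0,n_\infty})$, so the quotient is an $\mathbb{F}_{p_{i_0}}$-algebra; the previously-inverted $a_{i_0,n}$ force the corresponding $b_{i_0,n}$ to vanish; and for $i \neq i_0$, each relation $a_{i,n} b_{i,n} = p_i$ remains an irreducible conic over $\mathbb{F}_{p_{i_0}}$ (since $p_i$ is a nonzero residue modulo $p_{i_0}$), so successive quotients by these remain domains.

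The main obstacle is that $A$ is not Noetherian, so the standard formulation of Nagata's criterion does not apply verbatim. I would circumvent this by verifying atomicity of $A$ directly: every nonzero non-unit lies in some finitely presented $A_s$ (which is itself a UFD, since at each stage the construction either localizes at a prime or adjoins a Nagata-style extension $B[x,y]/(xy-p)$ with $p$ prime in $B$), and the only refinement of $A_s$-primes into further $A$-primes is $p_i \mapsto a_{i,n_\infty} b_{i,n_\infty}$ for $i \notin Q$, which happens at most once per $i$. Computability of $A$ is then routine: each $A_s$ is a finitely presented ring with decidable equality, and $A$ inherits decidable addition, multiplication, and equality as a computable union.
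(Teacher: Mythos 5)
Your construction is essentially the paper's: introduce a factorization $p_i = xy$ via a quotient $B = A[x,y]/\langle xy - p_i\rangle$, and when the $\Pi^0_2$ approximation advances, destroy that factorization by inverting one of the two factors via localization, then introduce a fresh factorization. Your $h(i,s)$ bookkeeping is interchangeable with the paper's marking of a length-$k$ prefix of witnesses for $i$.

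The one place where you diverge from the paper's verification, and where a genuine gap lives, is the UFD argument for the limit ring. You first propose to run Nagata's criterion directly on $A_\infty$ with $S$ generated by the terminal $a_{i,n_\infty}$, then correctly flag that $A_\infty$ is not Noetherian, and then say you would instead verify atomicity directly. But atomicity alone is not enough: to conclude $A_\infty$ is a UFD you would still need either (a) a non-Noetherian Nagata-type theorem for atomic domains (which you do not cite, and for which you would also need to prove that each terminal $a_{i,n_\infty}$ is prime \emph{in $A_\infty$}, not just in some $A_s$ -- your quotient sketch for this is plausible but is exactly the kind of thing that needs care across the infinite construction), or (b) a direct proof that every irreducible in $A_\infty$ is prime. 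The paper takes route (b), avoiding Nagata at the limit entirely: it invokes Nagata only at the \emph{finite} stages, where $A_s$ is Noetherian (Theorems~\ref{t:PropertiesOfLocalization} and \ref{t:PropertiesOfFactorization}), and then proves a ``fate of primes'' lemma (Lemma~\ref{l:FateOfPrimeAtFiniteStage}) showing that every prime of $A_m$ becomes, in $A_\infty$, a unit, a prime, or a product of two primes. That lemma delivers both atomicity and irreducible-implies-prime in $A_\infty$ in one sweep, with no general Nagata needed. Your phrase ``the only refinement of $A_s$-primes into further $A$-primes is $p_i \mapsto a_{i,n_\infty} b_{i,n_\infty}$'' is in fact the content of that lemma; you would need to actually prove it (by carefully checking at each stage that primes and non-associates not involved in the current operation remain so, which is the bulk of Sections 2 and 3).

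A second, smaller gap: you establish that for $i \in Q$ every \emph{introduced} factorization $p_i = a_{i,n}b_{i,n}$ trivializes, but you never explicitly argue that $p_i$ is \emph{prime} in $A_\infty$. Triviality of those particular factorizations is not by itself irreducibility, and one should not circularly appeal to the UFD property of $A_\infty$ to pass from irreducibility to primeness while still in the middle of proving it. The paper resolves this cleanly with Proposition~\ref{p:PrimesInLimit}: $p_i$ is prime in infinitely many $A_n$ (since each destroyed factorization restores primeness), and an element prime at cofinally many stages is prime in the union. You will want to reach for exactly that statement.
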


This theorem differs from the result that there is a computable field $F$ such that the set of prime elements in $F[x]$ is not computable.  One reason is that we are working directly with the usual primes rather than coding into polynomials (such as $x^2 - p_i$), or creating our own primes to do the coding.  As a result, our approach has a more number-theoretic flavor.  Furthermore, if $F$ is a computable field, then $U(F[x])$ is a computable set in any reasonable computable presentation of $F[x]$, so the set of irreducibles (and hence primes) of $F[x]$ will always be a $\Pi_1^0$ set by our above analysis, and hence could not be $\Pi_2^0$-complete.  Moreover, we obtain the following strong corollary that may not hold if we code complexity into other primes.

\begin{corollary}
There exists a computable UFD $A$ such that the set of primes of $A$ is $\Pi_2^0$-complete in every computable presentation of $A$, uniformly in an index for the presentation.
\end{corollary}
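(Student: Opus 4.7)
The plan is to deduce the corollary directly from Theorem \ref{t:Pi2ControlOfPrimes} by applying it to a fixed $\Pi_2^0$-complete set $Q$ (for instance the index set of total computable functions), yielding a computable UFD $A$ with $\mathbb{Z} \subseteq A$ and such that $p_i$ is prime in $A$ if and only if $i \in Q$. I will then argue that in \emph{any} computable presentation of $A$, the set of primes is $\Pi_2^0$-complete, uniformly in an index for that presentation.

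Fix $Q$ and the corresponding UFD $A$ as above. We already know from the discussion preceding Theorem \ref{t:Pi2ControlOfPrimes} that whenever $B$ is a computable integral domain, the set $\mathrm{Primes}(B)$ is $\Pi_2^0$, and this analysis is uniform in an index for $B$. So it suffices to produce, uniformly in an index for an arbitrary computable presentation $B$ of $A$, a many-one reduction from $Q$ to $\mathrm{Primes}(B)$.

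The key point is that the subring $\mathbb{Z}$ of $A$ is detectable inside any computable presentation in a uniform way. Given an index for a computable presentation $B$ of $A$, we can first locate $0^B$ and $1^B$ by searching for the (unique) additive and multiplicative identities using the computable ring operations on $B$. Then, iteratively, we compute $n^B := \underbrace{1^B + \cdots + 1^B}_{n \text{ times}}$ and $(-n)^B := -(n^B)$, producing uniformly in $i$ (and in the index of $B$) the image $p_i^B \in B$ of the integer prime $p_i$ under the isomorphism $A \to B$. Since primeness is an isomorphism invariant, we have $p_i^B \in \mathrm{Primes}(B)$ if and only if $p_i \in \mathrm{Primes}(A)$, which by Theorem \ref{t:Pi2ControlOfPrimes} holds if and only if $i \in Q$. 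Thus $i \mapsto p_i^B$ is a computable many-one reduction from $Q$ to $\mathrm{Primes}(B)$, and it is constructed uniformly from an index for $B$.

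The main obstacle, such as it is, is simply verifying uniformity: one must check that the search for $0^B$ and $1^B$ and the subsequent enumeration of the canonical copy of $\mathbb{Z}$ in $B$ can be performed by a single algorithm that takes an index for $B$ as input. This is routine, since $0$ and $1$ are uniquely characterized as the solutions of $x + x = x$ and $x \cdot x = x$ (with $x \neq 0$) in an integral domain, and these can be found computably from the ring operations on $B$. Combined with the upper bound that $\mathrm{Primes}(B)$ is always $\Pi_2^0$, this establishes the corollary.
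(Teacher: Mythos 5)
Your proposal is correct and follows essentially the same route as the paper: fix a $\Pi_2^0$-complete $Q$, apply Theorem \ref{t:Pi2ControlOfPrimes}, and then observe that in any computable presentation of $A$ one can computably locate the multiplicative identity (as the unique $a \neq 0$ with $a^2 = a$) and hence the image of each $p_i$, giving a uniform many-one reduction from $Q$ to the set of primes. The only cosmetic difference is that you also restate the $\Pi_2^0$ upper bound and locate $0$ before $1$, while the paper finds $1$ directly as the unique $a$ with $a^2 = a$ and $a + a \neq a$.
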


\begin{proof}
Fix a $\Pi^0_2$-complete set $Q$ (see \cite{Soare}, Theorem IV.3.2), and construct $A$ using this $Q$ as in Theorem \ref{t:Pi2ControlOfPrimes}.  Now given any computable presentation of $A$, we can find the multiplicative identity element of $A$ by searching until we find $a \in A$ such that $a^2 = a$ and $a+a \neq a$ (notice that the multiplicative identity is the only such element because $A$ is an integral domain).  With this element in hand, we can find the representation of each $p_i$ in $A$ by adding the multiplicative identity to itself the required number of times.  Therefore, the set of primes of $A$ is $\Pi_2^0$-complete in every computable presentation of $A$.
\end{proof}

Since we are working with the normal integer primes rather than creating some new ones, we need to be much more careful because of the algebraic dependence relationships that exist between them.  By adjusting the status of one prime, i.e.~keeping it prime or making it not prime, it is certainly conceivable that we could interfere with others.  For example, suppose that $A$ is a integral domain, that $q \in Primes(A)$, and that we want to break the primeness/irreducibility of $q$, i.e.~we want to introduce a nontrivial factorization of $q$.  One idea is to introduce a square root of $q$, i.e.~to introduce a new element $x$ with $x^2 = q$.  The natural way to do this is to consider $A[x] / \langle x^2 - q \rangle$, but this is problematic for a few reasons.  With this approach, we might destroy the primeness/irreducibility of other elements in $A$, as it is well-known that if $p,q \in \mathbb{N}$ are distinct odd primes, then $p$ is not prime in $\mathbb{Z}[\sqrt{q}] \cong \mathbb{Z}[x] / \langle x^2 - q \rangle$ if and only if $q$ is a square modulo $p$.  For example, in $\mathbb{Z}[\sqrt{7}]$, we have that $3$ is not prime because $3 \mid (1-\sqrt{7})(1+\sqrt{7})$ but $3 \nmid 1-\sqrt{7}$ and $3 \nmid 1+\sqrt{7}$.  Moreover, in $\mathbb{Z}[\sqrt{q}]$, irreducibles might fail to be prime, and hence we may have lost the property of being a UFD.  Finally, with this approach it is also impossible to later destroy this factorization as we can not make $x$ a unit without making $q$ a unit.

Another potential issue arises if we do want to destroy a given factorization by making an element a unit, but we are not in a UFD and/or are working with irreducibles.  For example, in \cite{KConrad-Quad}, the following example is given:  in $\mathbb{Z}[\sqrt{-14}]$ one has
\[
3 \cdot 3 \cdot 3 \cdot 3 = (5 + 2\sqrt{-14})(5 - 2\sqrt{-14})
\]
where all of the above factors are irreducible.  It follows that
\[
5 + 2 \sqrt{-14} \mid 3^4
\]
even though $5+2\sqrt{-14}$ and $3$ are not associates in $\mathbb{Z}[\sqrt{-14}]$ (as the units are $\pm 1$).  Thus, in this ring, if we later make $3$ a unit, then we must make $5 + 2 \sqrt{-14}$ a unit as well.

With all of these potential issues in mind, we now outline the idea behind the construction.  Start with $A_0 = \mathbb{Z}$.  We want to turn the normal primes $p_i$ {\em on} and {\em off} based on a $\Pi_2^0$ set $Q$.  Fix a computable $R \subseteq \mathbb{N}^3$ such that
\[
i \in Q \Longleftrightarrow (\forall w)(\exists z) R(w,z,i)
\]
So intuitively if $i$ acts infinitely often (i.e.~if for each $w$ in turn, we find a witnessing $z$), then we want $p_i$ to be prime in the end.  If $i$ acts finitely often, we want $p_i$ not to be prime.  To work for $i$, we assume finite action, and introduce a factorization $p_i = x_iy_i$ for new elements $x_i$ and $y_i$.  If $i$ acts at a later stage, we want to destroy this factorization.  To do this, we make $y_i$ a unit.  We will show that this keeps $x_i$ prime, and since $p_i$ will now be an associate of $x_i$, we will reinstate the fact that $p_i$ is prime.  We then introduce another factorization $p_i = x_i'y_i'$ for new $x_i'$ and $y_i'$, and continue, destroying it if $i$ acts again.  We do this forever, building a chain of integral domains $\mathbb{Z} = A_0 \subseteq A_1 \subseteq A_2 \subseteq \dots$.  Let $A_{\infty} = \bigcup_{n=0}^{\infty} A_n$. 

We build this ring in a computable fashion as follows.  We think of the natural numbers as being split into infinitely many infinite columns through a computable pairing function.  We start by putting the integers in the first column and call that $A_0$.  Now each extension will add infinitely many elements to the ring, and to do this at a given stage we will simply add these elements into the next column and computably define addition and multiplication at this point both within this column and between this column and previous ones.  Eventually, we will fill up all of the columns in turn, and define all of the operations, resulting in a computable ring.

With this construction, we will need to keep track of several things.  For example, when we make an element a unit, we will localize our ring, and since we have already constructed part of the ring so far we will need to ensure that we can computably determine the new elements to add in order to form this localization.  As a result we will need to ensure that we can computably keep track of the multiples of the $x_i$ and $y_i$ that we introduce.  Algebraically, we need to ensure that the rings along the way are all Noetherian UFDs and that unrelated primes are unaffected by these operations.  Finally, we need to check that this limiting ring has the required properties since a union of UFDs need not be a UFD in general.

\section{Turning a Prime into a Unit}

Let $A$ be an integral domain and let $q \in A$ be prime.  Suppose that we want to embed $A$ in another integral domain $B$ such that $q$ is a unit in $B$.  Naturally, one considers the corresponding localization, i.e.~we take the multiplicative set $S = \{1,q,q^2,\dots\}$ and let $B = S^{-1}A$.  Thinking of $A$ as sitting inside its field of fractions $F$ by identifying $a$ with $\frac{a}{1}$, we have
\begin{align*}
B & = \left\{\frac{a}{q^k} : a \in A \text{ and } k \geq 0\right\} \\
& = A \cup \left\{\frac{a}{q^k} : a \in A \text{ and } k \geq 1\right\}.
\end{align*}
Now if $A$ is a computable integral domain and we want to think about extending to $B$ in a computable fashion, then we need to know which of the elements in the set on the right are really new, along with when they are distinct from each other  For example, we have that $\frac{q^2}{q} = \frac{q}{1}$ is already an element of $A$, so we do not want to introduce it.

Notice that every element of $B \backslash A$ can be written in the form $\frac{a}{q^k}$ where $k \geq 1$ and $q \nmid a$.  To see this, suppose that we are given a general $\frac{a}{q^m} \in B$ with $a \in A$ and $m \geq 1$.  If $q \mid a$, we can factor out a $q$ from $a$ and cancel terms to obtain a different representation of the same element with a smaller power of $q$ in the denominator.  We can now induct (or take a minimal power in the denominator) to argue that this element is represented in the above set.  Thus, we have
\[
B = A \cup \left\{\frac{a}{q^k} : a \in A, q \nmid a, \text{ and } k \geq 1\right\}.
\]
Moreover, it is not difficult to show that the above representations are unique (i.e.~that $\frac{a}{q^k} \notin A$ when $q \nmid a$ and $k \geq 1$, and also that two elements of the right set are equal exactly when the numerator and power of $q$ are equal.  For the latter, if $\frac{a}{q^k} = \frac{b}{q^{\ell}}$ with $q \nmid a$ and $q \nmid b$, then $q^{\ell}a = q^kb$.  Cancel common $q$'s.  If $k = \ell$, then $a = b$, and we are done.  Otherwise, we have a $q$ left over on side, so either $q \mid a$ or $q \mid b$, a contradiction).

As a result, if $A$ is computable, and the set $\{a \in A : q \mid a\}$ is computable, then from $A$, $q$, and an index for this set we can uniformly computably build $B$ as an extension of $A$.  Since we are going to repeatedly apply this construction along with a factorization construction, we will need to ensure that the set of multiples of other primes remain computable as well.

\begin{proposition} \label{p:UnitsInLocalization} 
We have
\[
U(B) = U(A) \cup \{uq^k : k \geq 1 \text{ and } u \in U(A)\} \cup \left\{\frac{u}{q^k} : k \geq 1 \text{ and } u \in U(A)\right\}.
\]
\end{proposition}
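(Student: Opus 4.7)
The plan is to prove both containments separately, with the reverse containment being essentially trivial. For the reverse, each element of the right-hand side has an explicit inverse in $B$: units of $A$ are already units in $B$ via the embedding $a \mapsto a/1$; the element $uq^k$ with $u \in U(A)$ has inverse $u^{-1}/q^k \in B$; and the element $u/q^k$ with $u \in U(A)$ has inverse $u^{-1}q^k \in B$. This part requires no reasoning beyond unfolding the definitions.

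For the forward direction, suppose $\beta \in U(B)$ with inverse $\gamma \in B$. Using the canonical representation established in the preceding discussion, I would write $\beta = a/q^k$ and $\gamma = b/q^\ell$ with $k, \ell \geq 0$ where $q \nmid a$ whenever $k \geq 1$ and $q \nmid b$ whenever $\ell \geq 1$. From $\beta \gamma = 1$ in $B$, and using that $A$ embeds in $B$ with the canonical forms being unique, one deduces the equation $ab = q^{k+\ell}$ in $A$. The rest is a case analysis driven by the primality of $q$ in $A$. If $k \geq 1$ and $\ell \geq 1$, then $q \mid ab$ but $q \nmid a$ and $q \nmid b$, contradicting primality; so this case is impossible. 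If $k = \ell = 0$, then $ab = 1$ in $A$, giving $\beta = a \in U(A)$. If $k \geq 1$ and $\ell = 0$, then $ab = q^k$ with $q \nmid a$; iterating primality of $q$ and cancelling in the integral domain $A$ forces $b = q^k b'$ with $ab' = 1$, so $a \in U(A)$ and $\beta = a/q^k$ lies in the third set. The case $k = 0, \ell \geq 1$ is symmetric and places $\beta$ in the second set.

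The only mildly technical step is the induction that peels powers of $q$ off of $b$ (or $a$) in the mixed cases, but this is routine given primality of $q$ and cancellation in $A$. The real conceptual content sits in the uniqueness of the canonical form $a/q^k$ with $q \nmid a$, which was already established in the paragraphs preceding the proposition; once one has this, translating $\beta\gamma = 1$ into the single equation $ab = q^{k+\ell}$ in $A$ reduces the whole problem to elementary divisibility arguments inside $A$.
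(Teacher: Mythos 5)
Your proof is correct and takes essentially the same route as the paper: both verify the reverse containment by exhibiting inverses, and both prove the forward containment by reducing $\beta\gamma = 1$ to an equation of the form $ab = q^{k+\ell}$ in $A$ and then case-analyzing using primality of $q$ (with the ``peel off powers of $q$'' induction that the paper isolates as Lemma~\ref{l:PrimePowerDividesProductButNotTerm}). The only cosmetic difference is that you unify the representation by allowing $k,\ell \ge 0$ and split on the pair $(k,\ell)$ up front, whereas the paper first splits on whether $\sigma \in A$ and then on the form of the inverse; your four cases correspond one-to-one with the paper's four subcases.
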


\begin{proof}
Since $q \cdot \frac{1}{q} = 1$, we have that $q$ and $\frac{1}{q}$ are both elements of $U(B)$.  We trivially have that $U(A) \subseteq U(B)$ and also that $U(B)$ is closed under multiplication.  It follows that every element of the sets on the right is an element of $U(B)$.

We now show the reverse containment.  Let $\sigma \in U(B)$ be arbitrary.  Suppose first that $\sigma = a \in A$.  We have two cases.
\begin{itemize}
\item Suppose that there exists $b \in A$ with $\sigma b = ab = 1$.  We then trivially have that $\sigma = a \in U(A)$.
\item Suppose instead that there exists $b \in A$ with $q \nmid b$ and $\ell \geq 1$ such that $\sigma \cdot \frac{b}{q^{\ell}} = \frac{a}{1} \cdot \frac{b}{q^{\ell}} = 1$.  We then have $q^{\ell} = ab$.  Since $q$ is prime and $q \nmid b$, it follows from Lemma \ref{l:PrimePowerDividesProductButNotTerm} that $q^{\ell} \mid a$.  Fix $c \in A$ with $a = q^{\ell}c$.  We then have $q^{\ell} = ab = q^{\ell}cb$, so $cb = 1$ and hence $c \in U(A)$.  Thus, $\sigma = a = cq^{\ell}$ where $c \in U(A)$.
\end{itemize}
Suppose now that that $\sigma = \frac{a}{q^k}$ where $a \in A$ with $q \nmid a$ and $k \geq 1$.
\begin{itemize}
\item Suppose that there exists $b \in A$ with $\sigma b = \frac{a}{q^k} \cdot \frac{b}{1} = 1$.  We then have $q^k = ab$.  Since $q \nmid a$ and $q$ is prime, we conclude from Lemma \ref{l:PrimePowerDividesProductButNotTerm} that $q^k \mid b$.  Fix $c \in A$ with $b = cq^k$.  We then have $q^k = ab = acq^k$, so $ac = 1$ and hence $a \in U(A)$.  Thus, $\sigma = \frac{a}{q^k}$ with $a \in U(A)$.
\item Suppose instead that there exists $b \in A$ with $q \nmid b$ and $\ell \geq 1$ such that $\sigma \cdot \frac{b}{q^{\ell}} = \frac{a}{q^k} \cdot \frac{b}{q^{\ell}} = 1$.  We then have that $ab = q^{k+\ell}$.  Since $k,\ell \geq 1$, this implies that $q \mid ab$ and hence either $q \mid a$ or $q \mid b$ (since $q$ is prime), a contradiction.
\end{itemize}
This completes the proof.
\end{proof}

\begin{lemma} \label{l:PrimePowerDividesProductButNotTerm}
Let $A$ be an integral domain, let $p \in A$ be prime, and let $k \geq 1$.  If $p^k \mid ab$ and $p \nmid a$, then $p^k \mid b$.
\end{lemma}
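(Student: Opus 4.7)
The plan is to proceed by induction on $k \geq 1$.

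For the base case $k = 1$, the hypothesis becomes $p \mid ab$ and $p \nmid a$, and since $p$ is prime, the definition of primality immediately gives $p \mid b$, which is $p^1 \mid b$.

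For the inductive step, assume the statement holds for some $k \geq 1$, and suppose $p^{k+1} \mid ab$ with $p \nmid a$. Since $p \mid p^{k+1} \mid ab$, primality of $p$ together with $p \nmid a$ yields $p \mid b$, so I would fix $b' \in A$ with $b = pb'$. Then $p^{k+1} \mid apb'$, so writing $apb' = p^{k+1}c$ for some $c \in A$, I would cancel a factor of $p$ (legitimate in an integral domain, since $p \neq 0$) to conclude $ab' = p^k c$, i.e.~$p^k \mid ab'$. The inductive hypothesis, applied to $a$ and $b'$, gives $p^k \mid b'$, and multiplying by $p$ yields $p^{k+1} \mid pb' = b$.

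There is no real obstacle here: the only subtlety is the cancellation step, which requires the integral domain hypothesis, and the fact that we are reusing the assumption $p \nmid a$ at each level of the induction (which is fine because $a$ does not change during the induction, only $b$ is replaced by $b'$).
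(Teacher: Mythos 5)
Your proof is correct and follows essentially the same inductive strategy as the paper's: handle $k=1$ by the definition of prime, then in the inductive step extract a factor $p$ from $b$, cancel, and apply the inductive hypothesis. The only difference is cosmetic—you spell out the cancellation step (justified by the integral domain hypothesis) where the paper passes from $p^{k+1}\mid apc$ to $p^k\mid ac$ without comment.
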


\begin{proof}
By induction on $k$.  If $k = 1$, this is immediate from the definition of prime.  Suppose that the result is true for a fixed $k \geq 1$.  Suppose that $p^{k+1} \mid ab$ and $p \nmid a$.  Since $k \geq 1$, we have $p \mid ab$, so since $p$ is prime we know that $p \mid b$.  Write $b = pc$ for some $c \in A$.  We then have $p^{k+1} \mid apc$, so $p^k \mid ac$.  By induction, we conclude that $p^k \mid c$.  Since $b = pc$, it follows that $p^{k+1} \mid b$.
\end{proof}

\begin{theorem} \label{t:PropertiesOfLocalization}
Let $A$ be a computable Noetherian UFD and let $q \in A$ be prime.  Suppose that $\{a \in A : q \mid a$ in $A\}$ is a computable set.  Let $S = \{1,q,q^2,\dots\}$ and let $B = S^{-1}A$ as above.  
\begin{enumerate}
\item We can build $B$ as a computable extension of $A$ uniformly from $A$ and an index for the set $\{a \in A : q \mid a$ in $A\}$ of multiples of $q$.
\item Let $p \in Primes(A) \backslash Associates_A(q)$.  The multiples of $p$ in $B$ are precisely the elements of the following set:
\[
\{a \in A : p \mid a \text{ in } A\} \cup \left\{\frac{a}{q^k} : a \in A, k \geq 1, q \nmid a \text{ in } A, \text{ and } p \mid a \text{ in } A\right\}
\]
In particular, there are no new elements of $A$ that are multiples of $p$ in $B$.  Furthermore, if we have a computable index for the set $\{a \in A : p \mid a$ in $A\}$, then we can uniformly computably obtain a computable index for the set $\{\sigma \in B : p \mid \sigma$ in $B\}$.
\item If $p_1,p_2 \in Primes(A)$ are not associates in $A$, then they are not associates in $B$.
\item $Primes(A) \backslash Associates_A(q) \subseteq Primes(B)$.
\item $B$ is a Noetherian UFD.
\end{enumerate}
\end{theorem}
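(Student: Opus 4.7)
The plan is to handle the five clauses in order, leaning on the canonical-form analysis carried out in the discussion preceding the theorem together with the unit description in Proposition~\ref{p:UnitsInLocalization}. For part~(1), the uniqueness of the representation $a/q^k$ with $q \nmid a$ and $k \geq 0$ furnishes a decidable set of representatives. I would reserve a fresh column of $\mathbb{N}$ for the new elements and, iterating over pairs $(a,k)$ with $k \geq 1$, use the given computable test for $q \mid a$ to decide whether to code a new element $a/q^k$. Addition and multiplication on these formal representatives are computed by the standard field-of-fractions formulas, followed by canonical reduction that only requires the ability to decide $q \mid c$ and to search for the unique $d$ with $qd = c$ in $A$. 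All of this is uniform in $A$, $q$, and the index of the multiples of $q$.

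For part~(2), since $p$ is prime in $A$ and not an associate of $q$, we have $p \nmid q^m$ in $A$ for every $m \geq 0$, so Lemma~\ref{l:PrimePowerDividesProductButNotTerm} gives $p \mid a q^m \Leftrightarrow p \mid a$ in $A$. Applied to the equation defining $p \mid a/q^k$ in $B$, this collapses $B$-divisibility by $p$ to $A$-divisibility of the numerator, yielding both the claimed set identity and the uniform computability statement. Part~(3) then follows from Proposition~\ref{p:UnitsInLocalization}: writing $p_1 = u p_2$ in $B$ and case-splitting on $u$, the case $u \in U(A)$ gives associate status in $A$ directly, while the cases $u = u_0 q^k$ and $u = u_0/q^k$ with $k \geq 1$ each yield an equation in $A$ that forces $q$ to be an associate of one of the $p_i$, after which a further cancellation of powers of $q$ forces the other $p_i$ to be a unit, contradicting primality. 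Part~(4) then checks the three defining conditions of primeness in $B$: nonzero is trivial; non-unit-in-$B$ again uses Proposition~\ref{p:UnitsInLocalization} together with $p \notin Associates_A(q)$; and the divisibility clause reduces, after writing arbitrary $\alpha, \beta \in B$ in canonical form and invoking part~(2), to primality of $p$ in $A$.

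For part~(5), I would deduce Noetherianity from the standard fact that every ideal $J$ of $B = S^{-1}A$ is generated by $J \cap A$, which, being an ideal of the Noetherian ring $A$, is finitely generated. For the UFD property I would invoke Theorem~\ref{t:EquivCharOfUFD}: given a nonzero non-unit $\sigma \in B$, put it in canonical form $a/q^k$, factor $a = u r_1 \cdots r_n$ in $A$ with $u \in U(A)$ and $r_i$ primes of $A$, and observe that $q \nmid a$ forces no $r_i$ to be an associate of $q$. Part~(4) then promotes each $r_i$ to a prime of $B$, and Proposition~\ref{p:UnitsInLocalization} makes $u/q^k$ a unit of $B$, exhibiting $\sigma$ as a unit times a product of primes. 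The main obstacle throughout is careful bookkeeping: checking each divisibility or unit assertion only after reduction to canonical form, and invoking the hypothesis $p \notin Associates_A(q)$ precisely where needed to prevent collapse with the multiplicative set $S$.
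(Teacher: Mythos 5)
Your proposal is correct and, for parts (1)--(4), tracks the paper's argument closely: canonical forms $a/q^k$ with $q \nmid a$ for (1), reduction of $B$-divisibility to $A$-divisibility of numerators via primality of $p$ and $p \nmid q$ for (2), case-splitting on the description of $U(B)$ for (3), and reduction of the prime condition in $B$ to that in $A$ for (4). In part (3) you derive a contradiction in the cases $u = u_0 q^{k}$ and $u = u_0/q^{k}$, whereas the paper simply shows in those cases that $p_1$ and $p_2$ are associates in $A$ (which is all the contrapositive requires); both are valid and equally short.

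Part (5) is where you genuinely diverge. The paper dispatches it in one line by quoting the standard facts that localization preserves both the Noetherian property and the UFD property. You instead give a direct, self-contained argument: Noetherianity via the fact that every ideal $J$ of $S^{-1}A$ is generated by $J \cap A$, and the UFD property by lifting a prime factorization in $A$ to one in $B$ and invoking Theorem~\ref{t:EquivCharOfUFD}. The tradeoff is transparency versus length: your version is more elementary and ties the proof back to parts (2)--(4), while the paper's one-liner is shorter and avoids reproving textbook material. There is, however, one small gap in your UFD argument: the canonical form $a/q^k$ with $q \nmid a$ and $k \geq 1$ covers only the elements of $B \setminus A$, so when $\sigma \in A$ is divisible by $q$ your statement ``$q \nmid a$ forces no $r_i$ to be an associate of $q$'' does not apply. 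The fix is easy---write $\sigma = q^m a'$ with $q \nmid a'$, note $q^m \in U(B)$, and factor $a'$---but you should state it, since otherwise the argument silently fails for ordinary multiples of $q$ sitting inside $A$. You would also want to flag explicitly that exhibiting every nonzero nonunit as a unit times a product of primes suffices for Theorem~\ref{t:EquivCharOfUFD}: primes are irreducible, giving existence, and an irreducible $\sigma = u\pi_1\cdots\pi_m$ forces $m = 1$, giving that every irreducible is prime.
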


\begin{proof}
\begin{enumerate}
\item Immediate from above.

\item It is easy to see that the elements in the given sets are indeed multiples of $p$ in $B$.  Suppose then that $\sigma \in B$ is arbitrary with $p \mid \sigma$ in $B$.  Suppose first that $\sigma = a \in A$.  We need to show that $p \mid a$ in $B$.  We have two cases.
\begin{itemize}
\item Suppose that there exists $b \in A$ with $pb = \sigma = a$.  We then trivially have that $p \mid a$ in $A$.
\item Suppose instead there exists $b \in A$ with $q \nmid b$ and $\ell \geq 1$ such that $p \cdot \frac{b}{q^{\ell}} = a$.  We then have $pb = aq^{\ell}$.  Thus $p \mid aq^{\ell}$ in $A$, and since $p$ is prime and $p \nmid q$ (as $p \notin Associates_A(q)$), it follows that $p \mid a$ in $A$.
\end{itemize}
Suppose instead that that $\sigma = \frac{a}{q^k}$ where $a \in A$ with $q \nmid a$ and $k \geq 1$.  We need to show that $p \mid a$ in $A$.
\begin{itemize}
\item Suppose that there exists $b \in A$ with $pb = \sigma = \frac{a}{q^k}$.  We then have $pbq^k = a$, so $p \mid a$ in $A$.
\item Suppose instead that there exists $b \in A$ with $q \nmid b$ and $\ell \geq 1$ such that $p \cdot \frac{b}{q^{\ell}} = \sigma = \frac{a}{q^k}$.  We then have $pbq^k = aq^{\ell}$.  Thus $p \mid aq^{\ell}$ in $A$, and since $p$ is prime and $p \nmid q$ (as $p \notin Associates_A(q)$), it follows that $p \mid a$ in $A$.
\end{itemize}
This completes the proof.

\item We prove the contrapositive.  Suppose that $p_1$ and $p_2$ are associates in $B$.  Fix $\sigma \in U(B)$ such that $p_1 = \sigma p_2$.  We know the units of $B$ from Proposition \ref{p:UnitsInLocalization}, so we handle the cases.
\begin{itemize}
\item If $\sigma \in U(A)$, then clearly $p_1$ and $p_2$ are associates in $A$.
\item Suppose that $\sigma = uq^k$ with $u \in U(A)$.  We then have $p_1 = uq^kp_2$, so $p_2 \mid p_1$ in $A$.  Since $p_1$ is prime in $A$, it is irreducible in $A$, so as $p_2$ is not a unit we can conclude that $p_1$ and $p_2$ are associates in $A$.
\item Suppose that $\sigma = \frac{u}{q^k}$ where $k \geq 1$ and $u \in U(A)$.  We then have $p_1 = \frac{u}{q^k} \cdot p_2$, so $p_1u^{-1}q^k = p_2$.  This implies that $p_1 \mid p_2$ in $A$.  As in the previous case, this implies that $p_1$ and $p_2$ are associates in $A$.
\end{itemize}

\item Let $p \in Primes(A) \backslash Associates_A(q)$.  First notice that $p \notin U(B)$ from Proposition \ref{p:UnitsInLocalization} because $p \notin U(A)$ and $p$ is not an associate of any $q^k$ (because if $p \mid q^k$, then $p \mid q$ as $p$ is prime, and hence $p$ is an associate of $q$).  Suppose that
\[
\frac{p}{1} \mid \frac{a}{q^k} \cdot \frac{b}{q^{\ell}}
\]
where we allow the possibility that $k = 0$ and/or $\ell = 0$.  Fix $c \in A$ and $m \geq 0$ with
\[
\frac{p}{1} \cdot \frac{c}{q^m} = \frac{a}{q^k} \cdot \frac{b}{q^{\ell}}
\]
We then have $pcq^{k+\ell} = q^mab$, so $p \mid q^mab$ in $A$.  Now $p$ is prime and $p \nmid q$ (as $p \notin Associates_A(q)$), so either $p \mid a$ in $A$ or $p \mid b$ in $A$.  If $p \mid a$ in $A$, then it is easy to see that $\frac{p}{1} \mid \frac{a}{q^k}$ in $B$.  Similarly if $p \mid b$.  Therefore, $p \in Prime(B)$.

\item This is immediate from the fact that the localization of a Noetherian ring is a Noetherian ring, and the localization of a UFD is a UFD.
\end{enumerate}
\end{proof}

Notice that using this machinery we can prove the result (essentially appearing \cite{Baur} and \cite[Example 4.3.9]{SHTucker}) that there exists a computable PID $A$ such that $U(A)$ is $\Sigma^0_1$-complete in all computable presentations.  Fix a $\Sigma_1^0$-complete set $Q$.  Start with $A_0 = \mathbb{Z}$ and let $p_0,p_1,p_2,\dots$ be a listing of the usual primes.  As we go along, if we have $A_n$ and we ever see that $e \in Q$, then we perform our unit construction to build $A_{n+1}$ extending $A_n$ so that $p_e \in U(A_{n+1})$ while maintaining primeness of the $p_i$ not equal to $p_e$ or to any elements we already made units.  Let $A = A_{\infty} = \bigcup_{n=0}^{\infty} A_n$ and notice that $i \in Q$ if and only if $p_i \in U(A_{\infty})$.  Since the final ring $A_{\infty}$ is a localization of the PID $A_0 = \mathbb{Z}$, it follows that $A_{\infty}$ is a PID.

\section{Introducing a Factorization}

In this section, we suppose that we have a computable Noetherian UFD $A$ and an element $q \in Primes(A)$.  We introduce a new factorization of $q$ by going to the ring $B = A[x,y] / \langle xy - q \rangle$.  The hope is that we only destroy the primeness/irreducibility of $q$ (and its associates), and we leave enough flexibility so that we can later make $y$ a unit without making $x$ a unit (so that then $q$ and $x$ will be associates).

\begin{proposition}
$B$ is an integral domain.
\end{proposition}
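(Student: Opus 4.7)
The plan is to show that $xy - q$ is irreducible in $A[x,y]$. Since $A$ is a UFD, iterating the classical theorem that adjoining a polynomial variable preserves the UFD property gives that $A[x,y]$ is a UFD, in which irreducibles are prime. Thus $\langle xy - q \rangle$ will be a prime ideal of $A[x,y]$, and the quotient $B$ will be an integral domain.

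To establish irreducibility, I would suppose $xy - q = fg$ with $f, g \in A[x,y]$ and use additivity of the bidegree over the integral domain $A$ to conclude $\deg_x(f) + \deg_x(g) = 1$ and $\deg_y(f) + \deg_y(g) = 1$. Up to swapping $f$ and $g$, this leaves essentially two cases. If one of $f, g$ lies in $A$, then comparing the coefficient of $xy$ on both sides forces that factor to divide $1$ and hence be a unit. Otherwise $f = ay + b \in A[y]$ and $g = cx + d \in A[x]$ (or the symmetric version), in which case expanding $fg = acxy + adx + bcy + bd$ and comparing with $xy - q$ yields the system $ac = 1$, $ad = 0$, $bc = 0$, $bd = -q$. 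Here $ac = 1$ makes $a$ a unit, so $ad = 0$ forces $d = 0$, whence $bd = 0 \neq -q$ (using that $q \neq 0$ because $q$ is prime), a contradiction.

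The only real obstacle is keeping the bidegree bookkeeping straight and invoking $q \neq 0$ at the right moment; the argument is otherwise immediate from the classical UFD-extension theorem together with a small amount of coefficient matching. The Noetherian hypothesis on $A$ plays no role in this proposition, although it is available.
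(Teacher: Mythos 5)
Your proof is correct and follows essentially the same route as the paper: show $xy-q$ is irreducible in the UFD $A[x,y]$, hence prime, hence the quotient is a domain. The only cosmetic difference is the degree bookkeeping — you track $\deg_x$ and $\deg_y$ separately, whereas the paper uses the lexicographic multidegree — but both constrain the candidate factors in the same way and close with the same coefficient comparison forcing $q=0$.
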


\begin{proof}
We claim that $xy - q$ is irreducible in $A[x,y]$.  Use the lexicographic monomial ordering in $A[x,y]$ with $x > y$.  Recall that, under this ordering, the multi-degree of $\sum_{k=1}^n c_kx^{i_k}y^{j_k} \in A[x,y]$ is the lexicographically largest of the pairs $(i_1,j_1),\dots,(i_n,j_n)$.  Notice that the multi-degree of $xy - q$ is $(1,1)$, so if it factors, the leading terms must have multi-degree $(1,1)$ and $(0,0)$ or $(1,0)$ and $(0,1)$ (since multi-degrees add upon multiplication).  The former implies that one of the factors is constant, and hencemust be a unit since the leading coefficient of $xy - q$ is $1$.  Consider the latter.  We have
\[
xy - q = (ax + by + c)(dy + e)
\]
where $a \neq 0$ and $d \neq 0$.  Comparing coefficients of $x$ on each side, we conclude that $ae = 0$, so $e = 0$ (because $A$ is an integral domain).  Comparing constants, we conclude that $q = -ce$, so $q = 0$, a contraction.

Since $xy - q$ is irreducible in $A[x,y]$ and $A[x,y]$ is a UFD (because $A$ is a UFD), we conclude that $xy - q$ is prime in $A[x,y]$.  Therefore, the quotient $B = A[x,y] / \langle xy - q \rangle$ is an integral domain.
\end{proof}

\begin{proposition} \label{p:RepresentingElementsInFactorization}
Every element of $B$ can be represented uniquely in the form
\[
a_mx^m + \dots + a_1x + c + b_1y + \dots + b_ny^n
\]
where each $a_i \in A$, $b_i \in A$, and $c \in A$. 
\end{proposition}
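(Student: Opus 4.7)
The plan is to treat existence and uniqueness separately. For existence, I would reduce any polynomial representative modulo the defining relation $xy = q$. Given $f(x,y) \in A[x,y]$, any monomial $\alpha x^i y^j$ with $i,j \geq 1$ is congruent modulo $\langle xy-q\rangle$ to $\alpha q^{\min(i,j)} x^{i-\min(i,j)} y^{j-\min(i,j)}$, which is a constant, a pure power of $x$, or a pure power of $y$. Applying this substitution to each term of $f$ in turn yields, after finitely many steps, a representative in the required form.

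For uniqueness, the cleanest argument is to construct an $A$-algebra map into a Laurent polynomial ring where coefficient extraction is immediate. Let $F$ denote the fraction field of $A$ (which exists since $A$, being a UFD, is an integral domain), and define $\psi \colon A[x,y] \to F[t,t^{-1}]$ by $x \mapsto t$ and $y \mapsto qt^{-1}$. Then $\psi(xy-q) = q - q = 0$, so $\psi$ descends to a well-defined map $\bar\psi \colon B \to F[t,t^{-1}]$. An element in normal form maps to
\[
a_m t^m + \cdots + a_1 t + c + b_1 q t^{-1} + \cdots + b_n q^n t^{-n},
\]
and Laurent polynomials over $F$ have unique coefficient expansions. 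Thus if this Laurent polynomial is zero, every displayed coefficient vanishes in $F$, giving $a_i = 0$, $c = 0$, and $b_j q^j = 0$. Since $q$ is prime in $A$ it is nonzero, so $q^j$ is a unit in $F$ and $b_j q^j = 0$ forces $b_j = 0$. Hence the normal-form coefficients of any element of $B$ are uniquely determined.

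I do not anticipate a serious obstacle; the only subtlety is finding a ring in which to compare coefficients, and the embedding into $F[t,t^{-1}]$ handles this cleanly since the relation $xy = q$ is built into the substitution $y = qt^{-1}$. As a more elementary alternative avoiding the fraction field, one could instead assume an identity $f = (xy-q)\, g$ in $A[x,y]$ with $f$ in normal form, and match coefficients of each mixed monomial $x^k y^\ell$ for $k,\ell \geq 1$ (which must vanish on the left) to obtain the recurrence $g_{i,j} = q \cdot g_{i+1,j+1}$ for all $i,j \geq 0$; iterating this relation against the finite support of $g$ forces $g = 0$, and then the remaining coefficient comparisons force $f = 0$.
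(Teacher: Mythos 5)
Your existence argument matches the paper's: reduce every mixed monomial $\alpha x^i y^j$ ($i,j\geq 1$) by replacing $xy$ with $q$ until no mixed monomials remain. For uniqueness, though, your route is genuinely different. The paper stays inside $A[x,y]$: it observes that the difference of two normal forms is again a polynomial with no monomial divisible by $xy$, and that any nonzero multiple of $xy-q$ must contain a monomial divisible by $xy$ (by inspecting the leading term under a monomial order), forcing the difference to vanish. You instead map $B$ into the Laurent polynomial ring $F[t,t^{-1}]$ over the fraction field via $x\mapsto t$, $y\mapsto qt^{-1}$, where coefficient comparison is immediate; the factor $q^j$ is handled because $q\neq 0$ (being prime in the domain $A$) and hence a unit in $F$. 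Your embedding is essentially the same map $\hat\varphi$ the paper uses a bit later to establish $B\cong A[x,q/x]$, so you have in effect pulled that tool forward to prove uniqueness. The paper's monomial-order argument is more self-contained (no need for the fraction field), while your approach is arguably more conceptual and unifies nicely with the subsequent isomorphism. Your elementary alternative — matching coefficients in $f=(xy-q)g$ to get the recurrence $g_{i,j}=q\,g_{i+1,j+1}$ and then killing $g$ by finite support — is also valid and avoids both the monomial order and the fraction field, making it the most hands-on of the three. All three are correct; the differences are only in what machinery is invoked.
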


\begin{proof}
Given an arbitrary polynomial $h(x,y) \in A[x,y]$, we can divide by $xy - q$ (using the fact that the leading term is a unit) to obtain a remainder where no monomial is divisible by $xy$.  In other words, in the quotient, reduce any monomial with $xy$ in it to $q$, and repeat until there are no $xy$'s.  This proves existence.  For uniqueness, the difference of any polynomials of this form is another polynomial of this form, and hence has no monomial containing both an $x$ and a $y$.  Any nonzero multiple of $xy - q$ must have a monomial divisible by $xy$ by looking a leading term under some monomial ordering (and again using the fact that $A$ is an integral domain).
\end{proof}

Notice that in $B$ we have $xy = q$.  Thinking of $y = \frac{q}{x}$, we can alternatively think about $B$ in the following way.

\begin{proposition}
Consider the following subring of $A(x)$:
\[
A\left[x,\frac{q}{x}\right] =
\left\{
a_mx^m + \dots + a_1x + a_0 + a_{-1} \cdot \frac{q}{x} + \dots + a_{-n} \cdot \frac{q^n}{x^n} : a_i \in A
\right\}
\]
We have $B \cong A[x,\frac{q}{x}]$.
\end{proposition}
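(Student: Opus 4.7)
The plan is to construct the isomorphism explicitly by sending $x$ to $x$ and $y$ to $q/x$, and then verify bijectivity using Proposition \ref{p:RepresentingElementsInFactorization}.

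First, I would use the universal property of polynomial rings to define a ring homomorphism $\varphi \colon A[x,y] \to A(x)$ extending the inclusion $A \hookrightarrow A(x)$ by $\varphi(x) = x$ and $\varphi(y) = q/x$. Since $\varphi(xy - q) = x \cdot (q/x) - q = 0$, this factors through the quotient to give a ring homomorphism $\bar\varphi \colon B \to A(x)$. The image of $\bar\varphi$ is clearly contained in $A[x, q/x]$ (as a subring of $A(x)$), and since $\bar\varphi(x^i) = x^i$ for $i \geq 0$ and $\bar\varphi(y^j) = q^j/x^j$ for $j \geq 0$, every generator of $A[x, q/x]$ is hit; hence $\bar\varphi$ is surjective onto $A[x, q/x]$.

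For injectivity, I would invoke Proposition \ref{p:RepresentingElementsInFactorization}: every element of $B$ can be written uniquely as
\[
a_m x^m + \dots + a_1 x + c + b_1 y + \dots + b_n y^n
\]
with $a_i, b_j, c \in A$. Applying $\bar\varphi$ and suppose the image equals zero in $A(x)$; multiplying through by $x^n$ yields an equation
\[
a_m x^{m+n} + \dots + a_1 x^{n+1} + c x^n + b_1 q x^{n-1} + \dots + b_n q^n = 0
\]
in $A[x]$. The exponents of $x$ appearing on the left are all distinct (the $a_i$ contribute exponents $\geq n+1$, the constant $c$ sits at exponent $n$, and the $b_j$ contribute exponents $n-j \in \{0, \dots, n-1\}$), so each coefficient must vanish individually. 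This immediately gives $a_i = 0$, $c = 0$, and $b_j q^j = 0$. Since $A$ is an integral domain and $q \neq 0$, we conclude $b_j = 0$ for all $j$, so the original element of $B$ was zero.

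I do not expect a serious obstacle here: the only subtle point is to make sure the uniqueness statement from the preceding proposition is applied correctly to rule out cancellations among the terms $x^i$, $1$, and $q^j/x^j$ in $A(x)$, which the exponent bookkeeping above handles cleanly. Once injectivity and surjectivity onto $A[x, q/x]$ are established, $\bar\varphi$ is the desired isomorphism $B \cong A[x, q/x]$.
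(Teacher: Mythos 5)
Your proof is correct and takes essentially the same approach as the paper: both define $\varphi\colon A[x,y]\to A(x)$ by $x\mapsto x$, $y\mapsto q/x$, and show the induced map on $B$ is injective onto $A[x,q/x]$ by multiplying through by $x^n$ and comparing coefficients. The only cosmetic difference is that you explicitly cite the unique representation from Proposition \ref{p:RepresentingElementsInFactorization}, whereas the paper re-derives it by dividing by $xy-q$ and then applies the first isomorphism theorem to the kernel; your handling of the coefficients $b_jq^j$ (noting $q\neq 0$ in the integral domain $A$) is slightly more careful than the paper's terse statement.
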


\begin{proof}
Define a ring homomorphism $\varphi \colon A[x,y] \to A(x)$ by fixing $A$ pointwise, sending $x \mapsto x$, and sending $y \mapsto \frac{q}{x}$.  We claim that $\ker(\varphi) = \langle xy - q \rangle$.  First notice that $xy - q \in \ker(\varphi)$, so we certainly have $\langle xy - q \rangle \subseteq \ker(\varphi)$.  Let $f(x,y) \in \ker(\varphi)$.  Divide by $xy - q$ to write $f(x,y) = (xy - q) \cdot g(x,y) + r(x,y)$ where $r(x,y)$ has no monomial having both an $x$ and a $y$.  Notice that $\varphi(r(x,y)) = \varphi(f(x,y)) = 0$.  Writing
\[
r(x,y) = a_mx^m + \dots + a_1x + c + b_1y + \dots + b_ny^n
\]
we then have that
\[
a_mx^m + \dots + a_1x + c + b_1 \cdot \frac{q}{x} + \dots + b_n \cdot \frac{q^n}{x^n} = 0
\]
Multiplying by $x^n$ and looking at coefficients, we conclude that each $a_i = 0$, each $b_i = 0$, and $c = 0$.  Thus $r(x,y) = 0$, and hence $f(x,y) \in \langle xy - q \rangle$.

Since $B = A[x,y]/\langle xy - q \rangle$, it follows that we get an induced injective homomorphism $\hat{\varphi} \colon B \to A(x)$.  Since every element of $B$ can be represented in the form $a_mx^m + \dots + a_1x + c + b_1y + \dots + b_ny^n$, we conclude that $\text{range}(\varphi) = \text{range}(\hat{\varphi}) = A[x,\frac{q}{x}]$, and hence $B \cong A[x,\frac{q}{x}]$.
\end{proof}

We will use the different ways of representing elements of the extension $B \cong A\left[x,\frac{q}{x}\right]$ interchangeably depending on which is most convenient.  With this isomorphism in mind, we define the following two functions.

\begin{definition}
Define $\deg_x \colon B \backslash \{0\} \to \mathbb{Z}$ as follows.  Let $\sigma \in B$ and consider the unique representation of $\sigma$ given in Proposition \ref{p:RepresentingElementsInFactorization}.
\begin{itemize}
\item If there is a term containing a power of $x$ with a nonzero coefficient, then $\deg_x(\sigma)$ is the largest such power of $x$.
\item If there is no such power of $x$, but there is a nonzero constant term, then $\deg_x(\sigma) = 0$.
\item If there is no such power of $x$ and no constant term, let $m$ be the least power of $y$ with a nonzero coefficient, and define $\deg_x(\sigma) = -m$.
\end{itemize}
We define $\deg_y \colon B \backslash \{0\} \to \mathbb{Z}$ similarly.
\end{definition}

For example, we have $\deg_x(y^2 + y^5) = -2$ and $\deg_y(y^2 + y^5) = 5$.

\begin{proposition}
Let $\sigma,\tau \in B \backslash \{0\}$.  We then have
\begin{align*}
\deg_x(\sigma\tau) & = \deg_x(\sigma) + \deg_x(\tau) \\
\deg_y(\sigma\tau) & = \deg_y(\sigma) + \deg_y(\tau)
\end{align*}
\end{proposition}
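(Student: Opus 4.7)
The plan is to reduce the statement to the analogous multiplicativity of the usual polynomial degree on $A[x]$, via the isomorphism $B \cong A[x, q/x] \subseteq A(x)$ established in the preceding proposition. Given a nonzero $\sigma \in B$ with canonical representation
\[
\sigma = a_m x^m + \dots + a_1 x + c + b_1 y + \dots + b_n y^n,
\]
its image in $A(x)$ is $a_m x^m + \dots + a_1 x + c + b_1 q/x + \dots + b_n q^n/x^n$, so multiplying by $x^N$ for any $N \geq n$ clears all denominators and yields an element of $A[x]$.

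The key step I would prove first is the identity
\[
\deg_x(\sigma) = \deg_{A[x]}(x^N \sigma) - N \qquad \text{for every } N \geq n,
\]
where $\deg_{A[x]}$ is the usual polynomial degree. This is a short case analysis matching the three clauses in the definition of $\deg_x$. If some $a_i$ is nonzero and $m'$ is the largest such index, the leading term of $x^N \sigma$ in $A[x]$ is $a_{m'} x^{m'+N}$, so the degree is $m' + N$. If all $a_i$ vanish but $c \neq 0$, the leading term is $c x^N$, giving degree $N$. If all $a_i$ and $c$ vanish and $j$ is the least index with $b_j \neq 0$, the leading term is $b_j q^j x^{N-j}$, giving degree $N - j$. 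In each case, subtracting $N$ recovers $\deg_x(\sigma)$.

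With the identity in hand, additivity is immediate. Given nonzero $\sigma, \tau \in B$, choose $N, M$ large enough that $f := x^N \sigma$ and $g := x^M \tau$ both lie in $A[x]$. Since $A[x]$ is an integral domain and $f, g$ are nonzero, $\deg_{A[x]}(fg) = \deg_{A[x]}(f) + \deg_{A[x]}(g)$. Because $fg = x^{N+M}(\sigma\tau)$, applying the identity three times yields
\[
\deg_x(\sigma\tau) = \deg_{A[x]}(fg) - (N+M) = \bigl(\deg_{A[x]}(f) - N\bigr) + \bigl(\deg_{A[x]}(g) - M\bigr) = \deg_x(\sigma) + \deg_x(\tau).
\]
The $\deg_y$ statement follows by the symmetric isomorphism $B \cong A[y, q/y]$ obtained by sending $x \mapsto q/y$; the identical argument reduces $\deg_y$ to the usual degree in $A[y]$.

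The only real subtlety is verifying the identity $\deg_x(\sigma) = \deg_{A[x]}(x^N \sigma) - N$ across the three defining cases; once that is pinned down, additivity follows purely from the fact that $A[x]$ has no zero divisors.
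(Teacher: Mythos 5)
Your proof is correct, and it takes a genuinely different route from the paper's. The paper verifies additivity directly inside $B$: first for monomials $ax^k$, $by^{\ell}$, and nonzero constants $c$ (using that $A$ is an integral domain so the product of two nonzero monomials is a nonzero monomial), and then for general $\sigma,\tau$ by inspecting leading $x$-terms and $y$-terms. You instead push everything through the embedding $B \cong A[x,q/x] \subseteq A(x)$, clear denominators, and reduce to the classical fact that degree is additive in $A[x]$ over an integral domain. The linchpin of your argument, the identity $\deg_x(\sigma) = \deg_{A[x]}(x^N\sigma) - N$ for $N$ large enough to clear denominators, is stated precisely and your three-case verification of it is correct (including the needed observation that the candidate leading coefficients $a_{m'}$, $c$, or $b_j q^j$ are nonzero because $A$ is an integral domain and $q \neq 0$). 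Applying it to $\sigma$, $\tau$, and $\sigma\tau$ with $x^{N+M}(\sigma\tau) = (x^N\sigma)(x^M\tau)$ cleanly gives the result, and the $\deg_y$ case is symmetric as you say. The trade-off: the paper's argument is more self-contained and elementary, working entirely with the canonical form in $B$; yours outsources the essential content (no zero divisors implies leading coefficients multiply to something nonzero) to the standard degree formula in $A[x]$, which makes the structure of the argument a bit more transparent and avoids having to reason separately about leading $x$-terms and $y$-terms in a product inside $B$. Both are valid and of comparable length.
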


\begin{proof}
It is straightforward to prove this in the case when $\sigma$ and $\tau$ are monomials, i.e.~of the form $ax^k$, $by^{\ell}$, or $c \neq 0$ (notice that here we use the fact that $A$ is an integral domain to conclude that the product is a nonzero monomial).  For general $\sigma$ and $\tau$, we need only examine the leading $x$-terms or $y$-terms.
\end{proof}

\begin{proposition} \label{p:SumOfDegreesNonnegative}
We have
\[
\deg_x(\sigma) + \deg_y(\sigma) \geq 0
\]
for all $\sigma \in B \backslash \{0\}$, with equality if and only if $\sigma$ is a constant times a monomial.
\end{proposition}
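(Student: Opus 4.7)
The plan is to apply the unique normal form from Proposition \ref{p:RepresentingElementsInFactorization} and do a case split on the support of $\sigma$. Writing
\[
\sigma = \sum_{i=1}^{m} a_i x^i + c + \sum_{j=1}^{n} b_j y^j,
\]
I partition the nonzero terms of $\sigma$ into three classes: the positive-$x$ terms (some $a_i \neq 0$), the constant term ($c \neq 0$), and the positive-$y$ terms (some $b_j \neq 0$). The definitions of $\deg_x$ and $\deg_y$ each split into three clauses determined exactly by which of these classes are nonempty, so I can read off both degrees directly from the support pattern.

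First I would dispose of the four cases in which at least two of the three classes are nonempty. In each of these, one checks that $\deg_x(\sigma) \geq 0$ and $\deg_y(\sigma) \geq 0$, with at least one of the two strictly positive (the presence of a positive-$x$ term forces $\deg_x(\sigma) \geq 1$, and symmetrically for $y$; the constant alone only guarantees $0$, but then a positive $x$-term or $y$-term is also present). Hence $\deg_x(\sigma) + \deg_y(\sigma) \geq 1 > 0$. Moreover, in each such case $\sigma$ visibly has nonzero terms of more than one type, so it cannot be a constant times a monomial.

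The interesting cases are the three in which exactly one of the three classes is nonempty. If only the constant is nonzero, then $\sigma = c$ and $\deg_x(\sigma) = \deg_y(\sigma) = 0$, with $\sigma = c \cdot 1$ a constant times a monomial, giving equality. If only the positive-$x$ terms are nonzero, write $\sigma = \sum_{i=k}^{m} a_i x^i$ with $a_k, a_m \neq 0$ and $k \geq 1$; then the first clause of the definition of $\deg_x$ gives $\deg_x(\sigma) = m$, while the third clause of the definition of $\deg_y$ applies and gives $\deg_y(\sigma) = -k$, so $\deg_x(\sigma) + \deg_y(\sigma) = m - k \geq 0$ with equality iff $m = k$, i.e.\ iff $\sigma = a_m x^m$. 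The case where only positive-$y$ terms are nonzero is symmetric.

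I do not anticipate any real obstacle here. The proof is essentially bookkeeping: once one enumerates the seven possible support patterns for a nonzero $\sigma$, the three clauses defining each degree function dictate the answer, and the equality case pops out precisely when the support collapses to a single monomial (including the trivial monomial $1$).
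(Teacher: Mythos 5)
Your proposal is correct and follows essentially the same approach as the paper: both read off $\deg_x$ and $\deg_y$ directly from the normal form of Proposition~\ref{p:RepresentingElementsInFactorization} and case-split on which monomials are present (the paper phrases this as casing on the leading $x$-term, leading $y$-term, or constant, but the bookkeeping is the same). Your version simply enumerates the support patterns a bit more explicitly.
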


\begin{proof}
Let $\sigma \in B \backslash \{0\}$. If the leading $x$-term is $x^m$, then $\deg_x(\sigma) = m$ and $\deg_y(\sigma) \geq -m$, with equality if and only if $x^m$ is the leading $y$-term as well.  A similar argument works if the leading $y$-term is $y^n$.  Otherwise, we only have a constant, it which case both $\deg_x(\sigma) = 0$ and $\deg_y(\sigma) = 0$.
\end{proof}

\begin{proposition} \label{p:ProductsLandingInA}
Let $\sigma,\tau \in B$.  We then have that $\sigma\tau \in A$ in exactly the following cases:
\begin{itemize}
\item $\sigma = 0$ or $\tau = 0$.
\item $\sigma \in A$ and $\tau \in A$.
\item There exist $a,b \in A$ and $n \in \mathbb{N}^+$ with $\sigma = ax^n$ and $\tau = by^n$, or there exists $a,b \in A$ and $n \in \mathbb{N}^+$ with $\sigma = by^n$ and $\tau = ax^n$.
\end{itemize}
\end{proposition}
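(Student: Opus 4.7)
The plan is to handle the two implications separately. The ``if'' direction is a direct verification: if $\sigma = 0$ or $\tau = 0$, the product is $0 \in A$; if both lie in $A$, so does the product; and if $\sigma = ax^n$ and $\tau = by^n$ (or vice versa), then using the defining relation $xy = q$ we compute $\sigma \tau = ab(xy)^n = abq^n \in A$.

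For the ``only if'' direction, I would assume both $\sigma$ and $\tau$ are nonzero and that $\sigma \tau \in A$. Since $B$ is an integral domain, $\sigma \tau$ is a nonzero element of $A$, so it is a nonzero constant in the canonical form and hence $\deg_x(\sigma\tau) = \deg_y(\sigma\tau) = 0$. By multiplicativity of the degree functions, $\deg_x(\sigma) + \deg_x(\tau) = 0$ and $\deg_y(\sigma) + \deg_y(\tau) = 0$. Adding these and applying Proposition~\ref{p:SumOfDegreesNonnegative} to each factor yields
\[
0 = (\deg_x(\sigma) + \deg_y(\sigma)) + (\deg_x(\tau) + \deg_y(\tau)) \geq 0,
\]
which forces equality on both sides, so both $\sigma$ and $\tau$ are constants times monomials in the sense of Proposition~\ref{p:SumOfDegreesNonnegative}. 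Referring back to the canonical representation of Proposition~\ref{p:RepresentingElementsInFactorization}, each of $\sigma$ and $\tau$ must therefore lie in $(A \setminus \{0\}) \cup \{ax^m : a \neq 0,\ m \geq 1\} \cup \{by^n : b \neq 0,\ n \geq 1\}$.

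From here a short case analysis over the three possible forms for each of $\sigma$ and $\tau$ eliminates every combination not listed in the proposition. A product of two pure $x$-powers, of two pure $y$-powers, or of a nonzero constant with a pure $x$- or $y$-power, already appears in canonical form as a nonzero noncostant element of $B$, using the fact that $A$ is an integral domain to keep the relevant leading coefficient nonzero; hence such a product is not in $A$. The only remaining case is $\sigma = ax^m$, $\tau = by^n$ with $m, n \geq 1$: using $xy = q$ to reduce, $\sigma\tau = abx^my^n$ becomes $abq^{\min(m,n)} x^{m-n}$ if $m > n$ and $abq^{\min(m,n)} y^{n-m}$ if $n > m$, neither of which is in $A$ because $abq^{\min(m,n)}$ is a nonzero element of $A$; only $m = n$ lands in $A$, giving exactly case three. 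The main obstacle is not conceptual but organizational — keeping the nine-way case split tidy — since the degree argument combined with Proposition~\ref{p:SumOfDegreesNonnegative} does all the real work.
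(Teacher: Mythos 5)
Your proof is correct and follows essentially the same route as the paper: reduce to the nonzero case, use multiplicativity of $\deg_x$ and $\deg_y$ together with Proposition~\ref{p:SumOfDegreesNonnegative} to conclude that both factors are constants times monomials, and then finish. The only difference is that the paper ends with ``the result now follows,'' while you spell out the concluding case analysis (including the reduction $x^m y^n = q^{\min(m,n)} x^{m-n}$ or $q^{\min(m,n)} y^{n-m}$), which is a welcome bit of extra detail rather than a different argument.
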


\begin{proof}
In each of these cases it is easy to see that $\sigma\tau \in A$.  Suppose conversely that $\sigma\tau \in A$.  We may assume that $\sigma \neq 0$ and $\tau \neq 0$ or else we are done.  We then have
\[
\deg_x(\sigma) + \deg_x(\tau) = \deg_x(\sigma\tau) = 0
\]
so $\deg_x(\tau) = -\deg_x(\sigma)$.  Similarly, we have $\deg_y(\tau) = -\deg_y(\sigma)$.  Adding these gives
\[
\deg_x(\tau) + \deg_y(\tau) = -\deg_x(\sigma) - \deg_y(\sigma) = -(\deg_x(\sigma) + \deg_y(\sigma))
\]
Using Proposition \ref{p:SumOfDegreesNonnegative}, the only possibility is that $\deg_x(\tau) + \deg_y(\tau) = 0 = \deg_x(\sigma) + \deg_y(\sigma)$, and hence that both $\sigma$ and $\tau$ are constants times monomials.  The result now follows.
\end{proof}

\begin{corollary}
Let $a \in A$ with $q \nmid a$ in $A$.  If $\sigma \in B$ and $\sigma \mid a$ in $B$, then $\sigma \in A$ and $\sigma \mid a$ in $A$.  In other words, the set of divisors of $a$ in $B$ equals the set of divisors of $a$ in $A$.
\end{corollary}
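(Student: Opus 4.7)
The plan is to apply Proposition \ref{p:ProductsLandingInA} directly. Suppose $\sigma \in B$ satisfies $\sigma \mid a$ in $B$, so there exists $\tau \in B$ with $\sigma \tau = a$. Since $q \nmid a$ in $A$ and $A$ is an integral domain, $a \neq 0$, so both $\sigma \neq 0$ and $\tau \neq 0$. Proposition \ref{p:ProductsLandingInA} then leaves us with exactly two possibilities: either both $\sigma, \tau \in A$, or (up to swapping) $\sigma = a'x^n$ and $\tau = b'y^n$ for some $a',b' \in A$ and some $n \geq 1$.

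I would rule out the second case by a direct computation. Using the relation $xy = q$ in $B$, we get
\[
a = \sigma \tau = a'x^n \cdot b'y^n = a'b'(xy)^n = a'b' q^n,
\]
so $q^n \mid a$ in $A$. Since $n \geq 1$, this contradicts the hypothesis $q \nmid a$. Hence the first case holds, giving $\sigma \in A$ and $\tau \in A$, so $\sigma \mid a$ in $A$.

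For the final sentence of the statement, I would note that the reverse inclusion is trivial: any divisor of $a$ in $A$ is a fortiori a divisor in $B$ since $A \subseteq B$. Combining this with what was just shown gives equality of the two sets of divisors. No step here looks difficult; the whole argument is essentially a one-line application of the preceding proposition, with the $q \nmid a$ hypothesis precisely ruling out the unwanted monomial case.
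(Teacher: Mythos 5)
Your proof is correct and matches the paper's argument: both invoke Proposition \ref{p:ProductsLandingInA} to reduce to the monomial case $\sigma = a'x^n$, $\tau = b'y^n$ (or the symmetric one), and then observe that $xy = q$ forces $q^n \mid a$, contradicting $q \nmid a$. Your write-up simply spells out a few of the routine checks (nonvanishing of $\sigma,\tau$, the trivial reverse inclusion) that the paper leaves implicit.
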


\begin{proof}
By Proposition \ref{p:ProductsLandingInA}, the only possible new divisors of $a$ are when $a = bx^n \cdot cy^n$ with $n \geq 1$.  However, this implies that $a = bc \cdot q^n$, so $q \mid a$ in $A$.
\end{proof}

\begin{corollary} \label{c:UnitsAfterIntroducingFactorization}
The units of $B$ are precisely the units of $A$, i.e.~$U(B) = U(A)$.
\end{corollary}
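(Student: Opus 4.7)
The plan is to apply the preceding corollary with $a = 1$. Since $q$ is prime in $A$, we have $q \notin U(A)$, so $q \nmid 1$ in $A$. The containment $U(A) \subseteq U(B)$ is immediate because the inclusion $A \hookrightarrow B$ is a ring homomorphism sending $1$ to $1$, so any multiplicative inverse in $A$ remains a multiplicative inverse in $B$.

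For the reverse containment, let $\sigma \in U(B)$ be arbitrary. Then $\sigma \mid 1$ in $B$ by definition of a unit. Since $1 \in A$ and $q \nmid 1$ in $A$, the preceding corollary tells us that $\sigma \in A$ and $\sigma \mid 1$ in $A$. The latter precisely means $\sigma \in U(A)$, completing the proof.

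The main step has already been done in the previous corollary, which in turn rested on Proposition \ref{p:ProductsLandingInA} classifying the products landing back in $A$. There is no real obstacle here; the corollary is just the case $a = 1$ of the divisor statement, phrased for units. The essential content, that introducing the factorization $q = xy$ cannot manufacture new units, is already packaged in the degree calculations: any factorization $\sigma \tau = 1$ forces $\deg_x \sigma + \deg_x \tau = 0$ and $\deg_y \sigma + \deg_y \tau = 0$, and combining with Proposition \ref{p:SumOfDegreesNonnegative} collapses both factors to constants, i.e.\ elements of $A$.
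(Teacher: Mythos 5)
Your proposal is correct and takes exactly the paper's approach: apply the preceding corollary with $a = 1$, noting that $q$ is not a unit (being prime) so $q \nmid 1$. The paper states this in one line ("the set of units is the set of divisors of $1 \in A$"); your write-up just spells out the easy forward containment and the degree machinery underlying the divisor corollary.
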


\begin{proof}
This is immediate because the set of units is the set of divisors of $1 \in A$.
\end{proof}


\begin{theorem} \label{t:PropertiesOfFactorization}
Let $A$ be a computable Noetherian UFD and let $q \in A$ be prime.  Let $B = A[x,y] / \langle xy - q \rangle$ as above.
\begin{enumerate}
\item We can build $B$ as a computable extension of $A$ uniformly.
\item If $p_1,p_2 \in Primes(A)$ are not associates in $A$, then they are not associates in $B$.
\item \label{e:MultOfPInB} Let $p \in Primes(A) \backslash Associates_A(q)$ and let $\sigma \in B$.  We have that $p \mid \sigma$ in $B$ if and only every coefficient of $\sigma$ is divisible by $p$ in $A$.  In particular, there are no new elements of $A$ that are multiples of $p$ in $B$.  Furthermore, if we have a computable index for the set $\{a \in A : p \mid a$ in $A\}$, then we can uniformly computably obtain a computable index for the set $\{\sigma \in B : p \mid \sigma$ in $B\}$.
\item $Primes(A) \backslash Associates_A(q) \subseteq Primes(B)$.
\item \label{e:MultOfXInB} $x \mid \sigma$ in $B$ if and only if the constant term and the coefficients of each $y^j$ in $\sigma$ are all divisible by $q$ in $A$.  Therefore, if we have a computable index for the set $\{a \in A : q \mid a$ in $A\}$, then we can uniformly computably obtain a computable index for the set $\{\sigma \in B : x \mid \sigma$ in $B\}$.
\item $y \mid \sigma$ in $B$ if and only if the constant term and the coefficients of each $x^i$ in $\sigma$ are all divisible by $q$ in $A$.  Therefore, if we have a computable index for the set $\{a \in A : q \mid a$ in $A\}$, then we can uniformly computably obtain a computable index for the set $\{\sigma \in B :y \mid \sigma$ in $B\}$.
\item \label{e:XAndYArePrimesInB} $x$ and $y$ are primes in $B$ that are not associates of each other in $B$.
\item $x$ and $y$ are not associates in $B$ with any element of $A$, and hence not with any element of $Primes(A)$.
\item $B$ is a Noetherian UFD.
\end{enumerate}
\end{theorem}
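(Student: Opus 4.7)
The nine parts split naturally into two groups. Parts (1)--(8) are direct consequences of the canonical form from Proposition \ref{p:RepresentingElementsInFactorization} together with the observation that $B \cong A[x,q/x]$ embeds into the Laurent ring $C = A[x, x^{-1}]$ (a localization of $A[x]$, hence itself a UFD) via $y \mapsto q x^{-1}$. Part (9)---that $B$ is a Noetherian UFD---is the main obstacle, and my plan is to handle it via Nagata's criterion.

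For (1)--(3), I would represent elements of $B$ by their finite coefficient tuples in the canonical form; addition and multiplication then become polynomial operations subject to the rewrite rule $xy \to q$, manifestly uniform in $A$. Part (2) is immediate from $U(B) = U(A)$ (Corollary \ref{c:UnitsAfterIntroducingFactorization}): any $B$-associate relation is already an $A$-associate relation. For (3), uniqueness of the canonical form means that if $\sigma = p\tau$, then each coefficient of $\sigma$ is $p$ times the corresponding coefficient of $\tau$; the converse direction is immediate by factoring $p$ out monomial by monomial.

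For (4), the key is that $p$ remains prime in the overring $C$: $p$ is prime in $A[x]$ since $A[x]/\langle p \rangle \cong (A/\langle p \rangle)[x]$ is a domain, and stays prime upon inverting $x$ because $p \nmid x^k$ for any $k$. Hence $p \mid \sigma\tau$ in $B$ forces, without loss of generality, $p \mid \sigma$ in $C$; since the $x^{-j}$-coefficient of $\sigma$ viewed in $C$ is $b_j q^j$ and $p \nmid q$, the primality of $p$ gives $p \mid b_j$, and then (3) yields $p \mid \sigma$ in $B$. Parts (5) and (6) are direct computations using the reduction $xy^j = qy^{j-1}$. For (7), I compute $B/\langle x \rangle \cong A[x,y]/\langle x, xy-q \rangle \cong (A/\langle q\rangle)[y]$, which is a domain since $q$ is prime, showing $x$ is prime; symmetrically for $y$. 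The non-associateness claims in (7) and (8) follow from $U(B) = U(A) \subseteq A$ combined with uniqueness of the canonical form.

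The hardest piece is (9). Noetherianity is the Hilbert basis theorem applied to $A$ followed by taking a quotient. The UFD property is delicate because quotients of UFDs need not be UFDs, so a direct attempt via Theorem \ref{t:EquivCharOfUFD} looks unpromising. My plan is to apply Nagata's criterion: if $R$ is a Noetherian domain and $S \subseteq R$ is a multiplicative subset generated by prime elements, then $R$ is a UFD whenever $S^{-1}R$ is. I take $S = \{x^k : k \geq 0\}$, generated by the prime $x$ from (7). Inverting $x$ renders $y = q x^{-1}$ redundant, so $B[x^{-1}] = A[x, x^{-1}]$, which is a UFD as a localization of the UFD $A[x]$. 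Nagata's criterion then delivers that $B$ itself is a UFD.
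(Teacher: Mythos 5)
Your proposal is correct, and parts (1), (2), (3), (5), (6), (8), and (9) are essentially the paper's arguments; the worth noting differences are in (4) and (7), where you reach for more structural tools while the paper computes directly with coefficients. For (4), the paper picks the largest indices $k,\ell$ with $p \nmid a_k$ and $p \nmid b_\ell$ in the $A[x,q/x]$-representations of $\sigma$ and $\tau$, then inspects the $x^{k+\ell}$-coefficient of $\sigma\tau$, which equals $a_k b_\ell q^j$ plus terms divisible by $p$, and derives a contradiction. You instead pass to the Laurent overring $C = A[x,x^{-1}]$, observe that $p$ remains prime there, and pull divisibility back to $B$ by reading off the Laurent coefficients $a_i$ and $b_j q^j$ and invoking $p \nmid q$; this hands the combinatorics off to standard localization facts and to part (3), at the cost of noting (as you do) that the $C$-coefficient of $x^{-j}$ is $b_j q^j$, not $b_j$. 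For (7), the paper again runs the ``largest offending coefficient'' argument for $q$-divisibility, while you compute $B/\langle x \rangle \cong A[x,y]/\langle x, q\rangle \cong (A/\langle q\rangle)[y]$, a domain since $q$ is prime; this is shorter and more conceptual but relies on the reader being comfortable with prime ideals rather than prime elements. Both routes are valid: the paper's coefficient calculations are self-contained and elementary, whereas yours leverage the $B \hookrightarrow A[x,x^{-1}]$ embedding more systematically, which also dovetails nicely with your (and the paper's) Nagata-criterion proof of (9), where that same localization appears again.
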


\begin{proof}
\begin{enumerate}
\item Immediate from Proposition \ref{p:RepresentingElementsInFactorization}.

\item Immediate from Corollary \ref{c:UnitsAfterIntroducingFactorization}.

\item This follows from the fact that
\begin{align*}
p \cdot (a_mx^m & + \dots + a_1x + c + b_1y + \dots + b_ny^n) \\
& = pa_mx^m + \dots + pa_1x + pc + pb_1y + \dots + pb_ny^n.
\end{align*}

\item Let $p  \in Primes(A) \backslash Associates_A(q)$.  Notice that $p$ is nonzero and is not a unit of $B$ by Corollary \ref{c:UnitsAfterIntroducingFactorization}.  Let $\sigma,\tau \in B$ and suppose that $p \mid \sigma\tau$.  Assume that $p \nmid \sigma$ and $p \nmid \tau$.  We clearly have that both $\sigma$ and $\tau$ are nonzero.  Using \ref{e:MultOfPInB}, we know that $p$ divides every coefficient of $\sigma\tau$ in $A$, but there are coefficients of $\sigma$ and $\tau$ that are not divisible by $p$ in $A$.  Write
\[
\sigma = a_mx^m + \dots + a_1x + a_0 + a_{-1} \cdot \frac{q}{x} + \dots + a_{-n} \cdot \frac{q^n}{x^n}
\]
and
\[
\tau = b_mx^m + \dots + b_1x + b_0 + b_{-1} \cdot \frac{q}{x} + \dots + b_{-n} \cdot \frac{q^n}{x^n}
\]
Let $k$ and $\ell$ be largest possible such that $p \nmid a_k$ in $A$ and $p \nmid b_{\ell}$ in $A$.  Look at the coefficient of $x^{k+\ell}$ in $\sigma\tau$.  This coefficient will be a sum of terms, one of which is $a_kb_{\ell}q^j$ for some $j$, while other terms will be divisible by $p$ in $A$.  Since $p$ divides the resulting coefficient, it follows that $p \mid a_kb_{\ell}q^j$ in $A$.  However, this is a contradiction because $p$ is prime in $A$ but divides none of $a_k$, $b_{\ell}$, or $q$ (the last because $p$ is not an associate of $q$ in $A$).

\item Let $\sigma \in B$ and write
\[
\sigma = a_mx^m + \dots + a_1x + c + b_1y + \dots + b_ny^n
\]
Suppose first that $q \mid c$ and $q \mid b_j$ in $A$ for each $j$.  Fix $e \in A$ with $c = qe$ and fix $d_j \in A$ such that $b_j = qd_j$ for all $j$.  We then have
\[
x \cdot (a_mx^{m-1} + \dots + a_1 + ey + d_1y^2 + \dots + d_ny^{n+1}) = \sigma
\]
Conversely, suppose that $x \mid \sigma$, so that
\[
\sigma = x \cdot (a_mx^m + \dots + a_1x + c + b_1y + \dots + b_ny^n)
\]
for some $a_i,c,b_j \in A$.  Then we have
\[
\sigma = a_mx^{m+1} + \dots a_1x + cx + qb_1 + qb_2y + \dots qb_ny^{n-1}
\]

\item Similar to \ref{e:MultOfXInB}.

\item Notice that $x$ is nonzero and is not a unit of $B$ by Corollary \ref{c:UnitsAfterIntroducingFactorization}.  Let $\sigma,\tau \in B$ and suppose that $x \mid \sigma\tau$.  Assume that $x \nmid \sigma$ and $x \nmid \tau$.  We clearly have that both $\sigma$ and $\tau$ are nonzero.  Using \ref{e:MultOfXInB}, we know that $q$ divides the constant term and the coefficients of each $y^j$ in $\sigma\tau$ in $A$.  Write
\[
\sigma = a_mx^m + \dots + a_1x + a_0 + b_1y + \dots + b_ny^n
\]
and
\[
\tau = c_mx^m + \dots + c_1x + c_0 + d_1y + \dots + d_ny^n
\]
By \ref{e:MultOfXInB}, we may let $k$ and $\ell$ be largest possible such that $q \nmid a_k$ in $A$ and $q \nmid c_{\ell}$ in $A$.  Look at the coefficient of $x^{k+\ell}$ in $\sigma\tau$.  This coefficient will be a sum of terms, one of which is $a_kc_{\ell}$, while other terms will be divisible by $q$ in $A$.  Since $q$ divides the resulting coefficient, it follows that $q \mid a_kc_{\ell}$ in $A$.  However, this is a contradiction because $q$ is prime in $A$ but divides neither of $a_k$ or $c_{\ell}$ in $A$.

The proof that $y$ is prime in $B$ is similar.  The fact that $x$ and $y$ are not associates in $B$ follows from Corollary \ref{c:UnitsAfterIntroducingFactorization}.

\item Immediate from Corollary \ref{c:UnitsAfterIntroducingFactorization}.

\item We are assuming that $A$ is a Noetherian UFD.  Since $A$ is Noetherian, we know that $A[x,y]$ is Noetherian by the Hilbert Basis Theorem.  Since $B$ is a quotient of $A[x,y]$, it follows that $B$ is also Noetherian.  We also know from \ref{e:XAndYArePrimesInB} that $x$ is prime in $B \cong A[x,\frac{q}{x}]$.  To argue that $B$ is a UFD, we use Nagata's Criterion (see \cite[Theorem 20.2]{Matsumura} or \cite[Lemma 19.20]{Eisenbud}) which says the following.  

\begin{theorem}[Nagata's Criterion]
Let $B$ be a Noetherian integral domain.  Let $\Gamma$ be a set of prime elements of $B$, and let $S$ be the multiplicative set generated by $\Gamma$.  If $S^{-1}B$ is a UFD, then $B$ is a UFD.
\end{theorem}

Now $x$ is prime in $B \cong A[x,\frac{q}{x}]$ by \ref{e:XAndYArePrimesInB}.  The localization of $A[x,\frac{q}{x}]$ at $x$ equals $A[x,\frac{q}{x},\frac{1}{x}] = A[x,\frac{1}{x}]$, which is the localization of $A[x]$ at $x$.  Since $A$ is a UFD, we know that $A[x]$ is a UFD .  Since any localization of a UFD is a UFD, it follows that $A[x,\frac{1}{x}]$ is a UFD.  Since $B$ is a Noetherian integral domain, $x$ is prime in $B$, and $B$ localized at $x$ is a UFD, we may use Nagata's Criterion to conclude that $B$ is a UFD.
\end{enumerate}
\end{proof}

\section{Construction and Verification}

We now prove Theorem \ref{t:Pi2ControlOfPrimes}.  Let $Q$ be an arbitrary $\Pi^0_2$ set.  Fix a computable $R \subseteq \mathbb{N}^3$ such that
\[
i \in Q \Longleftrightarrow (\forall w)(\exists z) R(w,z,i)
\]
Fix a bijective computable pairing function $\langle \cdot, \cdot \rangle \colon \mathbb{N} \times \mathbb{N} \to \mathbb{N}$ with the property that $\langle i,s \rangle < \langle i,t \rangle$ whenever $s < t$.

We work in stages, and begin by initializing with $A_0 = \mathbb{Z}$.  We now start at stage $0$.  At a given stage, we will have introduced finitely many $x_i^{(k)}$ and $y_i^{(k)}$ for each $i$, and we will have marked a finite initial segment of $\mathbb{N}$ corresponding to those $w \in \mathbb{N}$ for which we have found a witnessing $z$ and done an action.  Furthermore, if $i$ has been initialized and the first unmarked $w$ is $k$, then we will have introduced $x_i^{(\ell)}$ and $y_i^{(\ell)}$ for each $\ell \leq k$, but we will not yet have introduced $x_i^{(k+1)}$ and $y_i^{(k+1)}$.

Suppose that we are now at a stage $\langle i,s \rangle$ and we have constructed through ring $A_n$ at this stage.
\begin{itemize}
\item If $s = 0$, we do an initialization for $p_i$ by introducing a first factorization.  In other words, we introduce $x_i^{(0)}$ and $y_i^{(0)}$ and perform the factorization construction on $p_i$ to create the ring $A_{n+1}$ (so we fill in one more column), and then move on to the next stage.
\item Suppose that $s \geq 1$, and let $k$ be the first unmarked $w$ corresponding to $i$.  Check to see if there exists $z \leq s$ such that $R(w,z,i)$.  If not, we do nothing and move to the next stage.  If so, we mark $k$ for $i$, and we act for $i$ at this stage, meaning that we do the following.  As mentioned above, we will have introduced through $x_i^{(k)}$ and $y_i^{(k)}$.  First, we perform the localization construction to make $y_i^{(k)}$ a unit in order to create the ring $A_{n+1}$.  Next, we introduce $x_i^{(k+1)}$ and $y_i^{(k+1)}$ and perform the factorization construction with these on $p_i$ to create the ring $A_{n+2}$.  Thus, we fill in two more columns in succession, and then move on to the next stage.
\end{itemize}
Finally, let $A_{\infty} = \bigcup_{n=0}^{\infty} A_n$.

\begin{theorem} \label{t:PrimesAndAssociatesAtStages}
Suppose that we are at the beginning of a given stage and we have constructed through $A_n$.  For each $i$ that has been initialized, let $x_i^{(k_i)}$ and $y_i^{(k_i)}$ be the last elements introduced for $i$ (so $k_i$ is the first unmarked $w$ for $i$).
\begin{itemize}
\item Suppose that $i$ has not yet been initialized.  We have the following:
\begin{itemize}
\item $p_i$ is prime in $A_n$.
\item The set $\{a \in A_n : p_i \mid a$ in $A_n\}$ is computable and we can uniformly find a computable index for it.
\item For any uninitialized $j \neq i$, we have that $p_i$ is not an associate of $p_j$ in $A_n$.
\item For any initialized $j \neq i$, we have that $p_i$ is not an associate of either $x_j^{(k_j)}$ or $y_j^{(k_j)}$ in $A_n$.
\end{itemize}
\item Suppose that $i$ has been initialized.  We have the following:
\begin{itemize}
\item $x_i^{(k_i)}$ and $y_i^{(k_i)}$ are prime in $A_n$, and are not associates in $A_n$.
\item The sets $\{a \in A_n : x_i^{(k_i)} \mid a$ in $A_n\}$ and $\{a \in A_n : y_i^{(k_i)} \mid a$ in $A_n\}$ are computable and we can uniformly find computable indices for them.
\item For any uninitialized $j \neq i$, we have that $x_i^{(k_i)}$ and $y_i^{(k_i)}$ are not associates of $p_j$ in $A_n$.
\item For any initialized $j \neq i$, we have that $x_i^{(k_i)}$ is not an associate of either $x_j^{(k_j)}$ or $y_j^{(k_j)}$ in $A_n$, and $y_i^{(k_i)}$ is not an associate of either $x_j^{(k_j)}$ or $y_j^{(k_j)}$ in $A_n$.
\end{itemize}
\item Suppose that we act for $i$ at this stage.  We then have $y_i^{(k_i)} \in U(A_{n+1})$, that $x_i^{(k_i)}$ is prime in $A_{n+1}$, and that $p_i$ is prime in $A_{n+1}$.
\end{itemize}
\end{theorem}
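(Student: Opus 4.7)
The plan is to prove the theorem by induction on stages, using Theorems~\ref{t:PropertiesOfLocalization} and~\ref{t:PropertiesOfFactorization} to transfer the required invariants across each ring extension. The base case at stage $0$ is routine: $A_0 = \mathbb{Z}$ has no initialized indices, and for each $i$ the corresponding $p_i$ is prime with a uniformly computable multiples set, while distinct positive primes are non-associate in $\mathbb{Z}$ since $U(\mathbb{Z}) = \{\pm 1\}$.

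For the inductive step, assume the invariants hold at the beginning of stage $\sigma = \langle i, s\rangle$ with current ring $A_n$, and consider the three possibilities. If $s = 0$, we apply the factorization construction to $p_i$, which is prime in $A_n$ by the inductive hypothesis, and invoke Theorem~\ref{t:PropertiesOfFactorization}: all other tracked primes remain prime, pairwise non-associate, and with uniformly computable multiples sets, and the new elements $x_i^{(0)}, y_i^{(0)}$ are primes, non-associate with each other and with every element of $A_n$, with uniformly computable multiples sets as well. If $s \geq 1$ and no witness is found, the ring is unchanged and nothing needs to be verified.

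If $s \geq 1$ and a witness is found, we first apply the localization construction at $y_i^{(k_i)}$, which by the inductive hypothesis is prime in $A_n$ and not an associate of $x_i^{(k_i)}$ there. Theorem~\ref{t:PropertiesOfLocalization} makes $y_i^{(k_i)}$ a unit in $A_{n+1}$, keeps $x_i^{(k_i)}$ prime, and preserves the primality, non-associativity, and uniformly computable multiples invariants for every other tracked prime. Since $p_i = x_i^{(k_i)} y_i^{(k_i)}$ and $y_i^{(k_i)} \in U(A_{n+1})$, the element $p_i$ is an associate of $x_i^{(k_i)}$ in $A_{n+1}$ and hence prime there, which establishes the third bullet of the theorem. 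We then apply the factorization construction to $p_i$ in $A_{n+1}$ to form $A_{n+2}$, arguing for the new $x_i^{(k_i+1)}, y_i^{(k_i+1)}$ and for the other tracked primes exactly as in the $s = 0$ case.

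The main obstacle is the bookkeeping across the compound localization-plus-factorization step in the action case. Specifically, before applying Theorem~\ref{t:PropertiesOfFactorization} to build $A_{n+2}$, one must use Theorem~\ref{t:PropertiesOfLocalization} to promote the inductive non-associativity of every other tracked prime with $x_i^{(k_i)}$ in $A_n$ to non-associativity in $A_{n+1}$, and then use the associate relation $p_i \sim x_i^{(k_i)}$ in $A_{n+1}$ to deduce non-associativity with $p_i$ in $A_{n+1}$. This is exactly the hypothesis needed to invoke the factorization theorem without disturbing the other tracked primes, and to guarantee that the fresh elements $x_i^{(k_i+1)}, y_i^{(k_i+1)}$ are prime, non-associate with every element of $A_{n+1}$ (hence with every tracked prime), and have uniformly computable multiples sets.
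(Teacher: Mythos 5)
Your proposal is correct and takes essentially the same approach as the paper, which simply invokes induction on stages together with Theorems~\ref{t:PropertiesOfLocalization} and~\ref{t:PropertiesOfFactorization}; you have just spelled out the casework (initialization, no witness, localization-plus-refactorization) that the paper leaves implicit.
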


\begin{proof}
The proof is immediate by using induction on the stages along with Theorem \ref{t:PropertiesOfLocalization} and Theorem \ref{t:PropertiesOfFactorization}.
\end{proof}

\begin{definition}
Let $i,k \in \mathbb{N}$ and suppose that we introduce $x_i^{(k)}$ and $y_i^{(k)}$ in our construction.  We call $x_i^{(k)}$ and $y_i^{(k)}$ {\em terminal for $i$} if we never introduce $x_i^{(k+1)}$ and $y_i^{(k+1)}$ for $i$.
\end{definition}

\begin{proposition} \label{p:UntouchedPrimesRemainPrime}
We have the following.
\begin{enumerate}
\item Suppose that we introduce $x_i^{(k)}$ and $y_i^{(k)}$ in $A_m$.  If $x_i^{(k)}$ and $y_i^{(k)}$ are terminal for $i$, then they are non-associate primes in $A_n$ for each $n \geq m$.
\item If $r \in A_m$ is prime in $A_m$ and is not an associate of any $p_i$, $x_i^{(k)}$, or $y_i^{(k)}$ (whether terminal or nonterminal) in $A_m$, then $r$ remains prime in $A_n$ for each $n \geq m$.
\end{enumerate}
\end{proposition}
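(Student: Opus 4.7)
The plan is to prove both parts simultaneously by induction on $n \geq m$, tracking at each step which operation the construction uses to pass from $A_n$ to $A_{n+1}$---either a factorization construction applied to some $p_j$ (at an initialization or the second half of an action for $j$), or a localization construction applied to the currently-introduced $y_j^{(k_j)}$ (the first half of an action for $j$)---and then invoking the preservation results from Theorem \ref{t:PropertiesOfLocalization} and Theorem \ref{t:PropertiesOfFactorization}. The base case $n=m$ is immediate: for (1) it is the content of Theorem \ref{t:PropertiesOfFactorization} (parts 7 and 8) that was used to introduce $x_i^{(k)}$ and $y_i^{(k)}$ in the first place, and for (2) it is the hypothesis.

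For the inductive step of (1), the assumption that $x_i^{(k)}$ and $y_i^{(k)}$ are terminal means that $i$ never acts after stage $m$, so the operation taking $A_n$ to $A_{n+1}$ targets some $p_j$ or some $y_j^{(k_j)}$ with $j \neq i$. Theorem \ref{t:PrimesAndAssociatesAtStages}, applied just before this operation, tells us that $x_i^{(k)}$ and $y_i^{(k)}$ (the current $x_i, y_i$, since $k$ is terminal) are not associates in $A_n$ of $p_j$ when $j$ is uninitialized, nor of the current $x_j^{(k_j)}$ or $y_j^{(k_j)}$ when $j$ is initialized (in which case acting for $j$ first localizes at $y_j^{(k_j)}$ and then factors $p_j$, which is now an associate of $x_j^{(k_j)}$). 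Theorem \ref{t:PropertiesOfLocalization} (parts 3 and 4) and Theorem \ref{t:PropertiesOfFactorization} (parts 2 and 4) then immediately give that $x_i^{(k)}$ and $y_i^{(k)}$ remain non-associate primes in $A_{n+1}$.

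For the inductive step of (2), I would strengthen the inductive hypothesis to assert that, in $A_n$, the prime $r$ is not an associate of any $p_j$, any $x_j^{(\ell)}$, or any $y_j^{(\ell)}$ introduced by stage $n$. The base case is given. For the step, the operation targets some $p_j$ or $y_j^{(k_j)}$, and by the strengthened hypothesis $r$ is not an associate of either, so Theorem \ref{t:PropertiesOfLocalization} (part 4) or Theorem \ref{t:PropertiesOfFactorization} (part 4) preserves primeness of $r$. Non-associateness of $r$ with pre-existing primes propagates by Theorem \ref{t:PropertiesOfLocalization} (part 3) and Theorem \ref{t:PropertiesOfFactorization} (part 2). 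The only new primes introduced are the $x_j^{(\ell)}, y_j^{(\ell)}$ created by a factorization, and Theorem \ref{t:PropertiesOfFactorization} (part 8) ensures these are non-associate in $A_{n+1}$ with every element of $A_n$, and in particular with $r$, so the strengthened invariant is maintained.

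The main obstacle is essentially bookkeeping: at every single transition one must verify that the prime under consideration ($x_i^{(k)}$, $y_i^{(k)}$, or $r$) is not an associate in $A_n$ of the element being factored or localized, so that the preservation theorems apply. For the named primes in (1), this is exactly what Theorem \ref{t:PrimesAndAssociatesAtStages} packages up; for the general prime $r$ in (2), it is the strengthened inductive invariant, and the key ingredient that lets us maintain this invariant as new elements appear is Theorem \ref{t:PropertiesOfFactorization} (part 8), which guarantees that the newly introduced $x_j^{(\ell)}, y_j^{(\ell)}$ do not become associates of any older elements.
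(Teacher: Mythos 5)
Your approach matches the paper's, which records only that the result ``follows by induction using Theorem \ref{t:PropertiesOfLocalization} and Theorem \ref{t:PropertiesOfFactorization}''; you are supplying the bookkeeping the paper leaves implicit, and the overall plan (induct on the ring extensions, verify at each step that the prime being tracked is not an associate of the element being localized at or factored, then apply the preservation clauses) is exactly right.

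One spot in part (2) deserves a more careful sentence. Your strengthened invariant asserts that $r$ is not an associate in $A_n$ of any $p_j$, $x_j^{(\ell)}$, or $y_j^{(\ell)}$, and you propagate it by citing the non-associate preservation results (Theorem \ref{t:PropertiesOfLocalization}(3), Theorem \ref{t:PropertiesOfFactorization}(2)) together with Theorem \ref{t:PropertiesOfFactorization}(8) for the brand-new $x_j^{(\ell)},y_j^{(\ell)}$. But those preservation results apply only to elements that are \emph{prime} in $A_n$, and once $j$ has been initialized, $p_j$ (and every $x_j^{(\ell')}$ with $\ell'<k_j$) is \emph{not} prime in $A_n$: it has the factorization $p_j=x_j^{(k_j)}y_j^{(k_j)}$. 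For such a non-prime element the invariant is trivially true (a prime cannot be an associate of a non-prime), but when the localization at $y_j^{(k_j)}$ turns $p_j$ back into a prime in $A_{n+1}$, you need to re-establish non-associateness with $r$ by a different route: $p_j$ becomes an associate of $x_j^{(k_j)}$, and $x_j^{(k_j)}$ \emph{was} prime in $A_n$, so $r$ not an associate of $x_j^{(k_j)}$ in $A_n$ propagates to $A_{n+1}$, and hence $r$ is not an associate of $p_j$ in $A_{n+1}$. You do carry out essentially this reasoning for part (1) (the parenthetical about $p_j$ becoming an associate of $x_j^{(k_j)}$), so the fix is just to note it in part (2) as well; the required fact about the ensuing factorization step is then that $r$ is not an associate of $p_j$ in the \emph{intermediate} ring, not in $A_n$. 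With that addition the argument is complete.
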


\begin{proof}
Again, this follows by induction using Theorem \ref{t:PropertiesOfLocalization} and Theorem \ref{t:PropertiesOfFactorization}.  
\end{proof}

\begin{proposition} \label{p:UnitsInLimit}
Let $a \in A_{\infty}$, so $a \in A_m$ for some $m \in \mathbb{N}$.  The following are equivalent:
\begin{enumerate}
\item $a \in U(A_{\infty})$.
\item $a \in U(A_n)$ for all sufficiently large $n \geq m$.
\item $a \in U(A_n)$ for some $n \geq m$.
\end{enumerate}
\end{proposition}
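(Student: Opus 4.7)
The plan is to prove the three-way equivalence by running the short cycle $(2)\Rightarrow(3)\Rightarrow(1)\Rightarrow(2)$. The only structural fact needed is that the chain $A_0 \subseteq A_1 \subseteq A_2 \subseteq \cdots$ consists of subring inclusions that fix $1$; this is guaranteed by Theorem \ref{t:PropertiesOfLocalization}(1) for stages where we localize and by Theorem \ref{t:PropertiesOfFactorization}(1) for stages where we introduce a factorization. Consequently $A_\infty = \bigcup_n A_n$ contains each $A_n$ as a subring with the same multiplicative identity.

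The implication $(2)\Rightarrow(3)$ is immediate. For $(3)\Rightarrow(1)$, suppose $ab = 1$ in $A_n$ for some $n\geq m$ and some $b \in A_n$. Since $A_n$ is a subring of $A_\infty$, the same equation holds in $A_\infty$, so $a \in U(A_\infty)$.

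The only implication with any content is $(1)\Rightarrow(2)$. Assume $a \in U(A_\infty)$ and fix $b \in A_\infty$ with $ab = 1$. Because $A_\infty = \bigcup_k A_k$, there is some $k$ with $b \in A_k$. Set $N = \max(m,k)$; for every $n \geq N$ both $a$ and $b$ lie in $A_n$, and since $A_n$ is a subring of $A_\infty$, the equation $ab=1$ holds in $A_n$. Hence $a \in U(A_n)$ for all $n \geq N$, which is (2).

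There is no real obstacle here: the entire argument is just the observation that unit equations are witnessed by a single element $b$, which must appear at some finite stage. The only thing one might pause over is whether the construction actually produces honest subring inclusions (as opposed to merely embedding inclusions), but this has already been verified when the localization and factorization extensions were set up, so nothing further is required.
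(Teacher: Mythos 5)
Your proof is correct and takes essentially the same approach as the paper: both run the cycle $(1)\Rightarrow(2)\Rightarrow(3)\Rightarrow(1)$, with the key observation that a unit equation $ab=1$ is witnessed by a single element $b$ that must appear at some finite stage $A_k$, so the equation already holds (and persists) in $A_n$ for all $n\geq\max(m,k)$.
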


\begin{proof}
If $a \in U(A_{\infty})$, then fixing $b \in A_{\infty}$ with $ab = 1$, we have that $a \in U(A_n)$ for any $n$ large enough such that $a,b \in A_n$.  If $a \in U(A_n)$ for some $n \geq m$, then fixing $b \in A_n$ with $ab = 1$, we have $a,b \in A_{\infty}$, so $a \in U(A_{\infty})$.
\end{proof}

\begin{proposition} \label{p:PrimesInLimit}
Let $r \in A_{\infty}$, so $r \in A_m$ for some $m \in \mathbb{N}$.  If there are infinitely many $n \geq m$ such that $r$ is prime in $A_n$, then $r$ is prime in $A_{\infty}$.
\end{proposition}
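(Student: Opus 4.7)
The plan is to verify each clause of the definition of prime directly by pushing the witness down to a sufficiently late stage $A_n$ at which $r$ is known to be prime.

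First, $r$ is nonzero because it is prime in some $A_n$. For the non-unit clause, I would argue by contradiction using Proposition \ref{p:UnitsInLimit}: if $r \in U(A_\infty)$, then $r \in U(A_n)$ for all sufficiently large $n$. But a prime element is never a unit, so this would contradict the hypothesis that $r$ is prime in infinitely many $A_n$. Hence $r \notin U(A_\infty)$.

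The main content is the divisibility clause. Suppose $a,b \in A_\infty$ and $r \mid ab$ in $A_\infty$, so there exists $c \in A_\infty$ with $rc = ab$. Choose $n \geq m$ large enough that $a, b, c \in A_n$ and such that $r$ is prime in $A_n$; such an $n$ exists because by hypothesis there are infinitely many stages at which $r$ is prime, while $a$, $b$, and $c$ each enter the chain at some fixed finite stage. In $A_n$ we then have $rc = ab$, so $r \mid ab$ in $A_n$. Since $r$ is prime in $A_n$, either $r \mid a$ or $r \mid b$ in $A_n$, and the witnessing quotient lives in $A_n \subseteq A_\infty$, so the same divisibility holds in $A_\infty$.

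No part of this looks like a genuine obstacle: the whole argument rests on the fact that divisibility witnesses and unit witnesses are finite objects that live at a definite stage, combined with the quantitative hypothesis that ``$r$ prime'' holds cofinally (in fact it only needs to hold at one stage beyond any given stage). The hypothesis of infinitely many such $n$ is used exactly so that we can choose $n$ past where $a, b, c$ appear; a single late stage at which $r$ is prime would suffice for any particular triple, and the ``infinitely often'' phrasing packages this uniformly.
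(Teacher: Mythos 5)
Your proof is correct and follows essentially the same route as the paper: fix a witness $c$ with $rc = ab$, pass to a late stage $A_n$ containing $a,b,c$ at which $r$ is prime, apply primeness there, and push the divisibility witness back up to $A_\infty$, handling the nonzero/non-unit clause by noting that $r$ fails to be a unit at infinitely many stages. The only cosmetic difference is that you cite Proposition \ref{p:UnitsInLimit} explicitly for the non-unit clause, whereas the paper states the same observation inline.
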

	
\begin{proof}
Suppose that there are infinitely many $n \geq m$ such that $r$ is prime in $A_n$.  Fix $a,b \in A_{\infty}$ and suppose that $r \mid ab$ in $A_{\infty}$.  Fix $c \in A_{\infty}$ with $rc = ab$.  Go to a point where each of $r,c,a,b$ exist, and then fix an $n$ beyond that such that $r$ is prime in $A_n$.  We then have $r \mid ab$ in $A_n$, so as $r$ is prime in $A_n$, either $r \mid a$ in $A_n$ or $r \mid b$ in $A_n$.  Therefore, either $r \mid a$ in $A_{\infty}$ or $r \mid b$ in $A_{\infty}$.  Finally, notice that $r$ is nonzero and not a unit in $A_{\infty}$ because infinitely often it is not a unit in $A_n$ (as infinitely often it is prime in $A_n$).
\end{proof}

\begin{corollary} \label{c:FateOfXYAndUntouchedPrimes}
We have the following.
\begin{enumerate}
\item If $x_i^{(k)}$ and $y_i^{(k)}$ are introduced and are terminal for $i$, then they are non-associate primes in $A_{\infty}$.
\item If $x_i^{(k)}$ and $y_i^{(k)}$ are introduced and are nonterminal for $i$, then $y_i^{(k)} \in U(A_{\infty})$, and $x_i^{(k)}$ is an associate of $p_i$ in $A_{\infty}$.
\item If $r \in A_m$ is prime in $A_m$ and is not an associate of any $p_i$, $x_i^{(k)}$, or $y_i^{(k)}$ in $A_m$ (whether terminal or nonterminal), then $r$ remains prime in $A_{\infty}$.
\end{enumerate}
\end{corollary}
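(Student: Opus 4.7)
The plan is to derive all three parts directly from the three preparatory results just above: Proposition \ref{p:UntouchedPrimesRemainPrime} (stage-by-stage preservation of primeness and non-associativity), Proposition \ref{p:UnitsInLimit} (being a unit transfers between the limit and some stage), and Proposition \ref{p:PrimesInLimit} (cofinal primeness implies primeness in $A_{\infty}$).

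For part (1), suppose $x_i^{(k)}$ and $y_i^{(k)}$ are introduced in $A_m$ and are terminal for $i$. Proposition \ref{p:UntouchedPrimesRemainPrime}(1) then gives that they are non-associate primes in every $A_n$ with $n \geq m$. Applying Proposition \ref{p:PrimesInLimit} to each of them separately shows both are prime in $A_{\infty}$. For non-associativity in $A_{\infty}$, I would argue by contradiction: if $x_i^{(k)} = u \, y_i^{(k)}$ for some $u \in U(A_{\infty})$, then by Proposition \ref{p:UnitsInLimit} there is some $n \geq m$ large enough that $u \in U(A_n)$ and the equation holds in $A_n$; but that contradicts the non-associativity in $A_n$ guaranteed by Proposition \ref{p:UntouchedPrimesRemainPrime}(1).

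For part (2), by the definition of ``nonterminal'' and the description of the construction, the pair $x_i^{(k)}, y_i^{(k)}$ is nonterminal precisely when we later act for $i$ at some stage $\langle i,s \rangle$ with $s \geq 1$; at that stage the construction explicitly performs the localization making $y_i^{(k)}$ a unit in the next ring $A_{n+1}$. Thus $y_i^{(k)} \in U(A_{n+1}) \subseteq U(A_{\infty})$ by Proposition \ref{p:UnitsInLimit}. The defining relation $x_i^{(k)} \cdot y_i^{(k)} = p_i$ imposed by the factorization construction (with $q = p_i$) persists in $A_{\infty}$, and multiplying through by the inverse of $y_i^{(k)}$ exhibits $x_i^{(k)}$ as an associate of $p_i$ in $A_{\infty}$.

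For part (3), Proposition \ref{p:UntouchedPrimesRemainPrime}(2) directly gives that such an $r$ is prime in every $A_n$ with $n \geq m$, and then Proposition \ref{p:PrimesInLimit} upgrades this to primeness in $A_{\infty}$. There is no real obstacle here; the only subtle point, if any, is in part (1), namely remembering to invoke Proposition \ref{p:UnitsInLimit} to push a would-be associating unit from $A_{\infty}$ back to some finite stage in order to derive the contradiction.
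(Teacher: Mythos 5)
Your proof is correct and follows essentially the same route as the paper, which simply declares the corollary ``immediate'' from Theorem \ref{t:PrimesAndAssociatesAtStages} together with Propositions \ref{p:UntouchedPrimesRemainPrime}, \ref{p:UnitsInLimit}, and \ref{p:PrimesInLimit}. You unpack that one-line citation in the natural way: Proposition \ref{p:UntouchedPrimesRemainPrime} plus \ref{p:PrimesInLimit} for parts (1) and (3), the contradiction via Proposition \ref{p:UnitsInLimit} for non-associativity in (1), and the construction's localization step (which is what Theorem \ref{t:PrimesAndAssociatesAtStages} records) together with the persistent relation $x_i^{(k)} y_i^{(k)} = p_i$ for part (2).
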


\begin{proof}
Immediate from Theorem \ref{t:PrimesAndAssociatesAtStages}, Proposition \ref{p:UntouchedPrimesRemainPrime}, Proposition \ref{p:UnitsInLimit}, and Proposition \ref{p:PrimesInLimit}.
\end{proof}

\begin{corollary} \label{c:PrimesCorrespondToQ}
$p_i$ is prime in $A_{\infty}$ if and only if $i \in Q$.
\end{corollary}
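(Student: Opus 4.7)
The plan is to split into cases based on whether $i \in Q$, and in each case trace what happens to $p_i$ in the construction using the preceding propositions. The key observation is that the quantifiers defining $Q$ line up perfectly with the combinatorics of ``marking'' $w$'s: since at each stage $\langle i, s \rangle$ with $s \geq 1$ we either do nothing or mark the current first unmarked $w$, the first unmarked $w$ for $i$ progresses monotonically through $0, 1, 2, \dots$, and it gets stuck at some $k$ precisely when $R(k, z, i)$ fails for every $z$.

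First I would handle the case $i \in Q$, that is, $(\forall w)(\exists z) R(w, z, i)$. By induction on $k$ I would verify that each $k$ is eventually marked for $i$: assuming $0, \dots, k-1$ have all been marked by some stage $\langle i, s_0 \rangle$, the value $k$ becomes the first unmarked thereafter, and some later stage $\langle i, s \rangle$ with $s$ large enough to witness $R(k, z, i)$ then marks it. Hence $i$ acts infinitely often. By the last bullet of Theorem \ref{t:PrimesAndAssociatesAtStages}, every time $i$ acts and we build the localized ring $A_{n+1}$, the element $p_i$ is prime in $A_{n+1}$. Thus $p_i$ is prime in infinitely many $A_n$, and Proposition \ref{p:PrimesInLimit} yields that $p_i$ is prime in $A_{\infty}$.

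Next I would handle the case $i \notin Q$, letting $k$ be the least $w$ with $(\forall z) \neg R(w, z, i)$. The same induction as above shows that every $w < k$ is eventually marked, so the pair $x_i^{(k)}, y_i^{(k)}$ does get introduced via the factorization construction on $p_i$; in particular the relation $p_i = x_i^{(k)} y_i^{(k)}$ holds in the ring where they first appear and therefore persists to $A_{\infty}$. But $k$ itself is never marked, so we never act for $i$ afterward, and hence $x_i^{(k)}$ and $y_i^{(k)}$ are terminal for $i$. By Corollary \ref{c:FateOfXYAndUntouchedPrimes}(1) they are both primes in $A_{\infty}$, and in particular neither is a unit. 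So $p_i$ admits a nontrivial factorization in the integral domain $A_{\infty}$, which makes $p_i$ reducible and therefore not prime.

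The main obstacle is organizational rather than conceptual: one must be careful to justify that the ``first unmarked $w$'' bookkeeping in the construction correctly tracks the $(\forall w)(\exists z)$ structure defining $Q$, despite the fact that actions for $i$ are interleaved with actions for all other $j$. Once that combinatorial verification is in hand, the proof is a short concatenation of Theorem \ref{t:PrimesAndAssociatesAtStages}, Proposition \ref{p:PrimesInLimit}, and Corollary \ref{c:FateOfXYAndUntouchedPrimes}, with no further ring-theoretic work required.
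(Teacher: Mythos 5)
Your proof is correct and follows the same route as the paper's: showing $i \in Q$ implies $p_i$ is prime in infinitely many $A_n$ (via the last bullet of Theorem \ref{t:PrimesAndAssociatesAtStages} and Proposition \ref{p:PrimesInLimit}), and showing $i \notin Q$ yields a terminal pair $x_i^{(k)}, y_i^{(k)}$ whose primeness in $A_{\infty}$ (via Corollary \ref{c:FateOfXYAndUntouchedPrimes}) gives a nontrivial factorization of $p_i$, making it non-irreducible and hence not prime. The only difference is that you spell out the combinatorial bookkeeping matching ``acts infinitely/finitely often'' with the $(\forall w)(\exists z)$ structure of $Q$, which the paper treats as immediate from the construction.
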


\begin{proof}
Suppose first that $i \in Q$.  We then act for $i$ infinitely often, and hence $p_i$ is prime in infinitely many $A_n$ by Theorem \ref{t:PrimesAndAssociatesAtStages}.  Thus, $p_i$ is prime in $A_{\infty}$ by Proposition \ref{p:PrimesInLimit}.

Suppose now that $i \notin Q$.  We then act for $i$ finitely often, so we may fix $k$ such that $x_i^{(k)}$ and $y_i^{(k)}$ are terminal for $i$.  By Corollary \ref{c:FateOfXYAndUntouchedPrimes}, each of $x_i^{(k)}$ and $y_i^{(k)}$ are prime in $A_{\infty}$.  Since $p_i = x_i^{(k)}y_i^{(k)}$, it follows that $p_i$ is not irreducible in $A_{\infty}$, and hence not prime in $A_{\infty}$.
\end{proof}

\begin{lemma} \label{l:FateOfPrimeAtFiniteStage}
Let $m \in \mathbb{N}$.  Let $r \in A_{\infty}$ and suppose that $r$ is prime in $A_m$.  We then have that either $r \in U(A_{\infty})$, $r$ is prime is $A_{\infty}$, or $r$ is the product of two primes in $A_{\infty}$.
\end{lemma}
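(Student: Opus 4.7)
The plan is a case analysis on what $r$ is up to associates in $A_m$, using the classification machinery already in place. If $r$ is not an associate in $A_m$ of any $p_i$, $x_i^{(k)}$, or $y_i^{(k)}$, then Corollary \ref{c:FateOfXYAndUntouchedPrimes}(3) immediately gives that $r$ remains prime in $A_\infty$. So the interesting work is when $r$ is an associate of one of these construction elements.

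Suppose first that $r$ is an associate of $p_i$ in $A_m$. I first observe that $i$ must be uninitialized at stage $m$, since otherwise $p_i = x_i^{(0)} y_i^{(0)}$ is already a nontrivial factorization in $A_m$ (with $x_i^{(0)}$ and $y_i^{(0)}$ non-units), contradicting that $r$ is prime in $A_m$. If $i \in Q$, then Corollary \ref{c:PrimesCorrespondToQ} gives that $p_i$ is prime in $A_\infty$, so $r$ is prime in $A_\infty$. If $i \notin Q$, then $i$ acts only finitely often, and the terminal pair $x_i^{(k')}, y_i^{(k')}$ consists of two non-associate primes of $A_\infty$ with $p_i = x_i^{(k')} y_i^{(k')}$, by Corollary \ref{c:FateOfXYAndUntouchedPrimes}(1). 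Writing $r = u p_i$ for some $u \in U(A_m) \subseteq U(A_\infty)$ gives $r = (u x_i^{(k')}) \cdot y_i^{(k')}$, a product of two primes in $A_\infty$.

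Suppose next that $r$ is an associate of $x_i^{(k)}$ in $A_m$ (the case of $y_i^{(k)}$ being symmetric). Since $r$ is prime in $A_m$, I claim $k$ must equal the current value $k_i$ at stage $m$: for if $k < k_i$, the localization step made $y_i^{(k)}$ a unit and the subsequent factorization gave $p_i = x_i^{(k+1)} y_i^{(k+1)}$, so $x_i^{(k)}$ is an associate in $A_m$ of the reducible product $x_i^{(k+1)} y_i^{(k+1)}$, contradicting primeness. Now if $x_i^{(k_i)}$ is terminal for $i$, then by Corollary \ref{c:FateOfXYAndUntouchedPrimes}(1) it remains prime in $A_\infty$ and so does $r$. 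If $x_i^{(k_i)}$ is nonterminal, Corollary \ref{c:FateOfXYAndUntouchedPrimes}(2) makes it an associate of $p_i$ in $A_\infty$, reducing to the previous paragraph: either $p_i$ is prime in $A_\infty$ (if $i \in Q$) or $p_i$ is a product of two terminal primes (if $i \notin Q$), and $r$ inherits the corresponding status. In the parallel $y_i^{(k_i)}$ variant, the nonterminal subcase instead has $y_i^{(k_i)} \in U(A_\infty)$, which places $r$ in $U(A_\infty)$. The main subtlety is the pair of restriction arguments ($i$ uninitialized in the first case, $k = k_i$ in the second) that prevent $r$ from being an associate of an already-reducible element of $A_m$; once these are in place, the remaining combinations of terminal versus nonterminal and $i \in Q$ versus $i \notin Q$ each fall directly into one of the three stated outcomes.
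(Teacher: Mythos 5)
Your proof takes essentially the same route as the paper's: you classify $r$ up to associates in $A_m$ against the construction elements $p_i$, $x_i^{(k)}$, $y_i^{(k)}$ and then trace the eventual fate of each through Corollary \ref{c:FateOfXYAndUntouchedPrimes} and Corollary \ref{c:PrimesCorrespondToQ}. That case analysis is correct and complete, and lands each $r$ in one of the three stated outcomes.

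However, the two ``restriction arguments'' you single out as the main subtlety are in fact neither needed nor, as stated, correct. Your claim that $i$ must be uninitialized if $r$ is an associate of $p_i$ fails for those $A_m$ that sit in the middle of a stage: after the localization that makes $y_i^{(k_i)}$ a unit but before the next factorization has been introduced, $p_i$ is an associate of $x_i^{(k_i)}$ and is therefore prime in that ring, even though $i$ has long been initialized (this is precisely the third bullet of Theorem \ref{t:PrimesAndAssociatesAtStages}). The same issue undermines the claim that $k$ must equal the ``current'' $k_i$: for these intermediate rings the bookkeeping you describe does not apply, since $x_i^{(k+1)}$ need not yet exist. Fortunately, neither observation is used in the remainder of the argument --- the conclusions in each case depend only on whether $i \in Q$ and whether the relevant $x_i^{(k)}$ or $y_i^{(k)}$ is terminal, not on any restriction relating $k$ to the stage $m$. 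The paper's proof avoids the problem entirely by simply running the same case analysis without imposing such constraints, which is the cleaner way to phrase it. You should drop the two restriction claims; with them removed, your proof is correct and matches the paper's.
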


\begin{proof}
We handle the various cases.
\begin{itemize}
\item If there exists $i \in Q$ such that $r$ is an associate of $p_i$ in $A_m$, then $r$ is prime in $A_{\infty}$ by Corollary \ref{c:PrimesCorrespondToQ}.
\item Suppose that there exists $i \notin Q$ such that $r$ is an associate of $p_i$ in $A_m$.  We then act for $i$ finitely often, so we may fix $k$ such that $x_i^{(k)}$ and $y_i^{(k)}$ are terminal for $i$.  By Corollary \ref{c:FateOfXYAndUntouchedPrimes}, each of $x_i^{(k)}$ and $y_i^{(k)}$ are prime in $A_{\infty}$.  We then have that $p_i = x_i^{(k)}y_i^{(k)}$, so $r = ux_i^{(k)}y_i^{(k)}$ for some unit $u \in U(A_{\infty})$.  Since $ux_i^{(k)}$ and $y_i^{(k)}$ are prime in $A_{\infty}$, we see that $r$ is the product of two primes in $A_{\infty}$.
\item If there exists $i,k \in \mathbb{N}$ such that $r$ is an associate of a terminal $x_i^{(k)}$ in $A_m$, then $r$ is prime in $A_{\infty}$ by Corollary \ref{c:FateOfXYAndUntouchedPrimes}.
\item If there exists $i,k \in \mathbb{N}$ such that $r$ is an associate of a terminal $y_i^{(k)}$ in $A_m$, then $r$ is prime in $A_{\infty}$ by Corollary \ref{c:FateOfXYAndUntouchedPrimes}.
\item If there exists $i \in Q$ and $k \in \mathbb{N}$ such that $r$ is an associate of a nonterminal $x_i^{(k)}$ in $A_m$, then $r$ is an associate of $p_i$ in $A_{\infty}$ by Corollary \ref{c:FateOfXYAndUntouchedPrimes} and hence $r$ is prime in $A_{\infty}$ by Corollary \ref{c:PrimesCorrespondToQ}.
\item If there exists $i \notin Q$ and $k \in \mathbb{N}$ such that $r$ is an associate of a nonterminal $x_i^{(k)}$ in $A_m$, then $r$ is an associate of $p_i$ in $A_{\infty}$ by Corollary \ref{c:FateOfXYAndUntouchedPrimes}, and hence $r$ is a product of two primes in $A_{\infty}$ from above.
\item If there exists $i,k \in \mathbb{N}$ such that $r$ is an associate of a nonterminal $y_i^{(k)}$ in $A_m$, then $r \in U(A_{\infty})$ by Corollary \ref{c:FateOfXYAndUntouchedPrimes}.
\item If $r$ is not an associate of any $p_i$, $x_i^{(k)}$, or $y_i^{(k)}$ in $A_m$, then $r$ is prime in $A_{\infty}$ by Corollary \ref{c:FateOfXYAndUntouchedPrimes}.
\end{itemize}
\end{proof}

\begin{theorem}
$A_{\infty}$ is a UFD.
\end{theorem}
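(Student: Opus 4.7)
The plan is to reduce to Lemma \ref{l:FateOfPrimeAtFiniteStage} via the standard observation that, for an integral domain, it suffices to show every nonzero non-unit factors as a product of \emph{prime} elements. Indeed, primes are always irreducible, so such a factorization is at the same time a factorization into irreducibles; and if $r$ is irreducible with $r = p_1 \cdots p_k$ a prime factorization, then all but one $p_i$ must be units (otherwise $r$ would admit a nontrivial factorization), forcing $k = 1$ and $r$ itself prime. Theorem \ref{t:EquivCharOfUFD} then delivers the UFD conclusion.

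To carry this out, fix a nonzero non-unit $a \in A_{\infty}$ and choose $n$ with $a \in A_n$. By induction on the stages, combined with Theorem \ref{t:PropertiesOfLocalization} and Theorem \ref{t:PropertiesOfFactorization}, each $A_n$ is a Noetherian UFD, starting from the base case $A_0 = \mathbb{Z}$. Proposition \ref{p:UnitsInLimit} guarantees that $a$ is not a unit in $A_n$ either, so $a$ admits a factorization $a = r_1 r_2 \cdots r_k$ with each $r_i$ prime in $A_n$. Now Lemma \ref{l:FateOfPrimeAtFiniteStage} asserts that each $r_i$ is, when viewed in $A_{\infty}$, either a unit, a prime, or a product of two primes. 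Substituting these descriptions back into $a = r_1 r_2 \cdots r_k$ expresses $a$ as a product of units and primes of $A_{\infty}$; since $a$ is not a unit, at least one prime factor must appear, and any residual unit factors can be absorbed into an adjacent prime factor (a unit times a prime is an associate prime). This yields the required prime factorization of $a$.

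The genuine obstacle has already been surmounted in the earlier sections: namely, tracking how primeness is affected by a single localization (Theorem \ref{t:PropertiesOfLocalization}) and by a single introduction of a factorization (Theorem \ref{t:PropertiesOfFactorization}), and then pushing that control into the limit ring $A_\infty$, which is precisely what Lemma \ref{l:FateOfPrimeAtFiniteStage} distills. The one conceptual concern one might have about the limit — that some brand new irreducible could appear in $A_{\infty}$ and fail to be prime — is automatically ruled out by the reduction above, since every nonzero non-unit already factors into primes. Hence the theorem reduces to the short combinatorial assembly described.
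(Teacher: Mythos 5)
Your proof is correct and follows essentially the same route as the paper: fix a stage $n$ containing $a$, factor $a$ into primes in the UFD $A_n$, and use Lemma \ref{l:FateOfPrimeAtFiniteStage} to push this factorization into $A_\infty$. The one small stylistic difference is that you observe up front that it suffices to show every nonzero non-unit factors into primes (rather than verifying the two conditions of Theorem \ref{t:EquivCharOfUFD} separately), which lets you run the core argument once instead of twice as the paper does; both are fine.
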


\begin{proof}
We prove that every nonzero nonunit element of $A_{\infty}$ is a product of irreducibles and that every irreducible is prime, which suffices by Theorem \ref{t:EquivCharOfUFD}.

Let $a \in A_{\infty}$ be nonzero and not a unit.  Fix $n$ with $a \in A_n$ and notice that $a$ is not a unit in $A_n$.  Since $A_n$ is a UFD, we may write $a = r_1r_2 \cdots r_{\ell}$ where each $r_i$ is irreducible and hence prime in $A_n$.  By Lemma \ref{l:FateOfPrimeAtFiniteStage}, each $r_j$ is either a unit in $A_{\infty}$, is prime in $A_{\infty}$, or is the product of two primes in $A_{\infty}$.  It's not possible that all $r_j$ are units in $A_{\infty}$, because this would imply that $a$ is a unit in $A_{\infty}$.  Thus, $a$ is a product of primes in $A_{\infty}$ (since we can absorb the units in one of the primes).  Since primes are irreducible, we conclude that $a$ is a product of irreducibles in $A_{\infty}$.

We now show that every irreducible element of $A_{\infty}$ is prime.  Let $a \in A_{\infty}$ be irreducible.  Fix $n$ with $a \in A_n$.  Notice that $a$ is nonzero and not a unit in $A_n$ because otherwise it would be zero or a unit in $A_{\infty}$.  Since $A_n$ is a UFD, we may write $a = r_1r_2 \cdots r_{\ell}$ where each $r_j$ is irreducible and hence prime in $A_n$.  By Lemma \ref{l:FateOfPrimeAtFiniteStage}, each $r_j$ is either a unit in $A_{\infty}$, is prime in $A_{\infty}$, or is the product of two primes in $A_{\infty}$.  It's not possible that all $r_j$ are units in $A_{\infty}$, because this would imply that $a$ is a unit in $A_{\infty}$.  If some $r_j$ is a product of two primes in $A_{\infty}$, then $a$ is not irreducible in $A_{\infty}$, a contradiction.  Also, if two of the $r_j$ are prime in $A_{\infty}$, then $A$ is not irreducible in $A_{\infty}$, a contradiction.  Thus, exactly one of the $r_i$ is prime in $A_{\infty}$ and the rest are units.  It follows that $A$ is a prime times some units in $A_{\infty}$, so $a$ is prime in $A_{\infty}$.
\end{proof}

This completes the proof of Theorem \ref{t:Pi2ControlOfPrimes}.


\begin{thebibliography}{99}
\bibitem{Baur}
W. Baur,
`Rekursive algebren mit kettenbedingungen',
{\em Z. Math. Logik Grundlagen Math.}
20 (1997) 37--46.
%
\bibitem{ConidisRadical}
C. Conidis,
`On the complexity of radicals in noncommutative rings',
{\em J. Algebra}
322(10) (2009) 3670--3680.
%
\bibitem{KConrad-Quad}
K. Conrad,
`Factoring in Quadratic Fields',
\url{www.math.uconn.edu/~kconrad/blurbs/gradnumthy/quadraticgrad.pdf}.
%
\bibitem{Ideals}
R. Downey, S. Lempp \and J. Mileti,
`Ideals in computable rings',
{\em J. Algebra}
314(2) (2007) 872--887.
%
\bibitem{Eisenbud}
D. Eisenbud,
 {\em Commutative algebra with a view toward algebraic geometry}
 (Springer-Verlag, New York, 1995).
%
\bibitem{FSS}
H. Friedman, S. Simpson \and R. Smith,
`Countable algebra and set existence axioms',
{\em Ann. Pure Appl. Logic}
25(2) (1983) 141--181.
%
\bibitem{FSSAdd}
H. Friedman, S. Simpson \and R. Smith,
`Addendum to: ``Countable algebra and set existence axioms''',
{\em Ann. Pure Appl. Logic}
28(3) (1985) 319--320.
%
\bibitem{FrohlichShep}
A. Fr{\"o}hlich \and J. C. Shepherdson,
`Effective procedures in field theory',
{\em Philos. Trans. Roy. Soc. London. Ser. A.}
248 (1956) 407--432.
%
\bibitem{Matsumura}
H. Matsumura,
 {\em Commutative ring theory}
 (Cambridge University Press, Cambridge, 1986).
%
\bibitem{MetakidesNerode}
G. Metakides \and A. Nerode,
`Effective content of field theory',
{\em Ann. Math. Logic}
17(3) (1979) 289--320.
%
\bibitem{MillerNotices}
R. Miller,
`Computable fields and Galois theory',
{\em Notices Amer. Math. Soc.}
55(7) (2008) 798--807.
%
\bibitem{Rabin}
M. Rabin,
`Computable algebra, general theory and theory of computable fields',
{\em Trans. Amer. Math. Soc.}
95 (1960) 341--360.
%
\bibitem{Soare}
R. Soare,
 {\em Recursively enumerable sets and degrees}
 (Springer-Verlag, Berlin, 1987).
%
\bibitem{SHTucker}
V. Stoltenberg-Hansen \and J. V. Tucker,
`Computable rings and fields' in {\em Handbook of computability theory}
 (North-Holland, Amsterdam, 1999).
\end{thebibliography}
\end{document}